\declaretheorem{theorem}
\declaretheorem[sibling=theorem]{lemma}
\declaretheorem[sibling=theorem]{proposition}
\declaretheorem[sibling=theorem]{definition}
\newcommand{\TODO}[1]{}
\title{On the Rigidity of Sparse Random Graphs}
\author{
Nati Linial
\thanks{Department of Computer Science, Hebrew University, Jerusalem 9190401. e-mail: nati@cs.huji.ac.il. Supported by a grant from the Israel Science Foundation.}
\and
Jonathan Mosheiff
\thanks{Department of Computer Science, Hebrew University, Jerusalem 9190401. e-mail: yonatanm@cs.huji.ac.il. Supported by the Adams Fellowship Program of the Israel Academy of Sciences and Humanities.}}
\date{}
\begin{document}

\maketitle

\begin{abstract}
A graph with a trivial automorphism group is said to be {\em rigid}. Wright proved \cite{Wri71} that for $\frac{\log n}{n}+\omega(\frac 1n)\leq p\leq \frac 12$ a random graph $G\in G(n,p)$ is rigid whp. It is not hard to see that this lower bound is sharp and for $p<\frac{(1-\epsilon)\log n}{n}$ with positive probability $\text{aut}(G)$ is nontrivial. We show that in the sparser case $\omega(\frac 1 n)\leq p\leq \frac{\log n}{n}+\omega(\frac 1n)$, it holds whp that $G$'s $2$-core is rigid. We conclude that for all $p$, a graph in $G(n,p)$ is reconstrutible whp. In addition this yields for $\omega(\frac 1n)\leq p\leq \frac 12$ a canonical labeling algorithm that almost surely runs in polynomial time with $o(1)$ error rate. This extends the range for which such an algorithm is currently known~\cite{CP08}.
\end{abstract}

\section{Introduction}

It is a truth universally acknowledged, that random objects are asymmetric. It was shown by Wright \cite{Wri71} that for $\frac 12\ge p\geq \frac {\log n}{n} + \omega(\frac 1n)$ a random $G(n,p)$ graph has, whp, a trivial automorphism group. He actually worked with the $G(n,M)$ model, but the reduction to $G(n,p)$ is well-known and follows easily from the Chernoff bound. Also, a graph and its complement clearly have the same automorphism group, so we can restrict ourselves to the range $\frac 12\ge p$. Wright's bound is tight, since a graph $G$ of slightly smaller density is likely to have isolated vertices, which can be swapped by a $G$-automorphism. This paper concerns the range of smaller $p$ by showing that for $\omega(\frac 1n)\leq p\leq n^{-\frac 12-\epsilon}$ whp all of $G$'s automorphisms are essentially trivial. Here is our main result:

\begin{restatable}{theorem}{mainTheorem}\label{thm:mainTheorem}
Let $G=(V,E)$ be a $G(n,p)$ graph with $\omega(\frac 1n)\le p\le n^{-\frac 12-\epsilon}$. Then whp its $2$-core has a trivial automorphism group.
\end{restatable}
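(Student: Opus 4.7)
My plan is to show that whp every vertex $v$ of the $2$-core $T$ carries an automorphism-invariant local signature that distinguishes it from every other vertex of $T$. Concretely, for each $v \in V(T)$ let $I_r(v)$ denote the rooted-isomorphism class of the ball $B_r^T(v)$ of radius $r$ around $v$ inside $T$. Any automorphism $\sigma$ of $T$ with $\sigma(u)=v$ restricts to a rooted isomorphism $B_r^T(u)\to B_r^T(v)$, so $I_r(u)=I_r(v)$; thus if all $I_r$-values are pairwise distinct, $T$ can have no non-trivial automorphism. I would choose $r=r(p)$ so that $(np)^r$ lies in a window above $\log n$ but comfortably below $n$; throughout the range $\omega(1/n)\le p\le n^{-1/2-\epsilon}$ this is achievable with $r=O(\log\log n/\log(np))$, which is a constant when $np\ge n^{\Omega(1)}$ and grows slowly when $np$ is only $\omega(1)$.

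The core technical step is a collision estimate: for each pair of distinct $u,v \in V(T)$, bound $\Pr[I_r(u)=I_r(v)]$. Since $(np)^r=o(n)$, a sequential-exposure argument shows that whp the two $r$-balls in $G$ are vertex-disjoint and can be coupled to two independent Galton--Watson trees of depth $r$ with $\mathrm{Binomial}(n,p)$ offspring. An entropy estimate on the Galton--Watson measure on rooted trees then gives $\Pr[I_r(u)=I_r(v)] \le \exp(-\Omega((np)^r\log(np)))$, which is $n^{-\omega(1)}$ by our choice of $r$. Summing over the $\binom{n}{2}$ pairs yields that whp all signatures are distinct, and hence $T$ is rigid.

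The main obstacle is that $T$ is not a local object: membership in $V(T)$ depends on global information about $G$, so $I_r(v)$ is not naively a function of a bounded-radius neighborhood of $v$ in $G$. To handle this I would prove a structural lemma: whp the peeling that produces the $2$-core is realized by pendant subtrees of depth $O(\log n)$ dangling off the rest of the graph, so both the event $v \in V(T)$ and the value of $B_r^T(v)$ can be decoded from the $G$-ball of $v$ of radius $r+O(\log n)$, which is still of sublinear size. This restores the locality needed for the sequential-exposure coupling above, at the cost of verifying that the pruning does not destroy the entropy estimate for the Galton--Watson trees. Finally, cyclic orbits of $\sigma$ of length $\ge 3$ require no separate treatment: $I_r(v)\ne I_r(\sigma(v))$ for any single $v$ already rules out the orbit.
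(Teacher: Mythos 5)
Your route is genuinely different from the paper's (which uses only the depth-$2$ invariant $\nabla(v)$, the multiset of neighbors' degrees, and amplifies its weak distinguishing power along the $k$ positions of a pair of cycles), but it has a gap at its very center: the collision estimate for deep ball signatures is asserted, not proved, and is false in the form stated. Already for $r=2$ the rooted isomorphism class of $B_2(v)$ is essentially the root degree together with the \emph{multiset} of its neighbors' degrees, and the maximum point probability of a multiset of $\approx np$ i.i.d.\ binomials is $(np)^{-\Theta(\sqrt{np})}=\exp(-\Theta(\sqrt{np}\log(np)))$ -- this is exactly what the paper's Lemmas \ref{lem:MultinomialBound} and \ref{Lem:MultinomialWithBinomialProbsBound} compute -- which is exponentially larger than your claimed $\exp(-\Omega((np)^2\log(np)))$. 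The loss comes precisely from the unordered nature of rooted-tree isomorphism (the measure concentrates on the ``typical'' multiset), it recurs at every level of the tree, and controlling it when $np$ grows arbitrarily slowly is the entire technical content of the theorem; it cannot be dispatched by citing ``an entropy estimate on the Galton--Watson measure.'' Note that your scheme \emph{needs} a per-pair bound of $o(n^{-2})$ because you union-bound over all $\binom{n}{2}$ pairs, whereas the paper never proves anything that strong: it lives with the bound $(np)^{-\Omega(\sqrt{np})}$ (which may be far larger than $n^{-2}$ when, say, $np=\log\log n$) and beats the count $n^{2k}p^{2k}$ of cycle pairs by collecting $\Omega(k)$ independent chances to distinguish along each configuration.

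Two further gaps. First, the union bound must cover pairs $u,v$ at distance at most $2r$, whose balls overlap -- and an automorphism can certainly map a vertex to a nearby vertex -- so the independent-Galton--Watson coupling is unavailable exactly for the pairs you cannot discard; you would need to bound the conditional collision probability given the overlap pattern, which is what the paper's configuration machinery (exposing $\tilde N(T)$ first and working with the sets $U_i$ of private neighbors) is engineered to do. Second, the locality claim is wrong as stated: a $G$-ball of radius $r+O(\log n)$ with $np=\omega(1)$ has $(np)^{\Omega(\log n)}$ vertices, i.e.\ essentially all of $G$, not a sublinear set. The correct statement is that the pruned set $\tilde R$ is small and rarely meets the relevant neighborhoods (the paper's Lemma \ref{lem:Humongous2Core} and the peripheral-vertex Lemma \ref{lem:PeripherialBound}); but in your scheme this must be combined, per pair, with a version of the entropy estimate that survives conditioning on the pruning, and that interaction is exactly the step you wave away.
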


This shows that for this range of $p$, whp $\text{aut}(G)$ is generated by:
\begin{itemize}
	\item Automorphisms of rooted trees that are attached to the $2$-core.
	\item Automrophisms of the tree components and swaps of such components.
\end{itemize}

The most interesting range of this statement is $p\le \frac{\log n+(1+\epsilon)\log\log n}{n}$. For larger $p$ the $2$-core is the whole graph, in which range ours is just a new proof for the rigidity of sufficiently dense random graphs. 

{\bf General strategy of the proof:} We denote the vertex set of $G$'s $2$-core by $R(G)$. It is easy to see that $\text{aut}(G)$ fixes $R(G)$ setwise and our proof shows first that $\text{aut}(G)$ actually fixes $R(G)$ pointwise. In order to prove the theorem in full we show that this rigidity does not result from boundary effects of vertices near $V\setminus R(G)$. The neighbor set of $v\in V$ and its degree are denoted by $N(v)$ and $d(v)$. If $x_1,\ldots,x_k$ are the neighbors of $v$, we denote by $\nabla(v)$ the {\em multiset} $\{d(x_i)\}_1^k$. Clearly $\nabla$ is preserved by automorphisms. We fix some $k\leq \log n$ and consider two directed rooted cycles $v_1,\ldots v_k$ and $u_1\ldots u_k$ in $G$. We show that whp every two such cycles have many {\em incompatible }pairs $(v_i,u_i)$ for which $\nabla(v_i)\neq\nabla(u_i)$. This already implies that $R(G)$ is fixed pointwise. In the full proof of the theorem we find, for every two such cycles, an incompatible pair $(v_i,u_i)$, where both $v_i$ and $u_i$ are at distance $\ge 3$ from $V\setminus R(G)$. Such a pair is not only incompatible in $G$, but also in $R(G)$, proving the theorem.

It turns out that Theorem \ref{thm:mainTheorem} yields some interesting insights on the well-known graph reconstruction conjecture which we now recall. Let $G$ be an $n$-vertex graph. When we delete a vertex of $G$ we obtain an $(n-1)$-vertex graph. By doing this separately for each vertex in $G$ we obtain the $n$ graphs that make up $G$'s {\em deck}. The {\em graph reconstruction conjecture}~(\cite{Kel57},~\cite{Ula60}) posits that every two graphs of $3$ or more vertices that have identical decks must be isomorphic. A graph $G$ is said to be {\em reconstructible} if every graph with the same deck is isomorphic to $G$. Bollob\'as proved \cite{Bol90} that whp $G(n,p)$ graphs are reconstructible for all $\frac{(5/2+\epsilon)\log n}{n}\leq p\leq 1-\frac{(5/2+\epsilon)\log n}{n}$. We show that this is in fact true for every $0\leq p\leq 1$. One reason why this extension of range is of interest has to do with the {\em edge reconstruction conjecture}~\cite{Har64} which states that every graph can be reconstructed from its deck of edge-deleted subgraphs. This leads to the notion of {\em edge-reconstructible} graphs. We recall two facts from this theory: (i) Every reconstructible graph with no isolated vertices is edge-reconstructible~(e.g., \cite{Bon91}) (ii) Every $n$-vertex graph with at least $\log_2 (n!) + 1=n\log_2 n+O(n)$ edges is edge reconstructible. Our result applies to the range $|E|\le O(n\log n)$ where the edge reconstruction problem is still open.

We turn to discuss the {\em canonical labeling problem} \cite{BES80}. Let $\cal L$ be a class of graphs. A canonical labeling of $G\in \cal L$ assigns distinct labels to the vertices of $G$, where the labeling is uniquely determined by $G$'s isomorphism class. In the probabilistic version of this problem, $\cal L$ is a probability space of graphs and we seek to efficiently find a canonical labeling for almost all graphs in $\cal L$. Such a canonical labeling algorithm clearly solves in particular the {\em random graph isomorphism problem} for $\cal L$. Specifically we ask for which values of $p$  there is a polynomial time canonical labeling in $G(n,p)$. By considering the complementary graph it suffices to consider the range $p\leq 1/2$. Such an algorithm is known \cite{CP08} for $p\in[\Theta(\frac{\ln n}{n}),1/2]$. Our proof of Theorem \ref{thm:mainTheorem} yields a polynomial time algorithm for $\omega(\frac 1n)\leq p\leq n^{-(0.5+\epsilon)}$, whence a polynomial time solution exists for $p\in [\omega(\frac 1n),\frac 12]$.

\section{Technical Preliminaries}

\paragraph{Graph theory:} Graphs are denoted $G=(V,E)$ and usually $n:=|V|$. The neighbor set of $u\in V$ is denoted by $N(u)$.
For $U\subseteq V$, we denote $N(U):=(\bigcup_{u\in U} N(u))\setminus U$ and $\tilde{N}(U) :=\bigcup_{u\in U} \tilde{N}(u)$.

The set of cross edges between two subsets $U,W\subseteq V$ is denoted $E(U,W):=\{uv\in E\mid u\in U, v\in V\}$, and $d(U,W)=|E(U,W)|$ (to wit: even if $U\cap W\neq\emptyset$, we consider every relevant edge exactly once). For a singleton $U=\{u\}$, we use the shorthand $d(u,W)=d(U,W)$. Also, $E(U)=E(U,U)$.

For $U\subseteq V$ we denote $\sigma(U) := \{v\in V\setminus{U}\mid d(v,U)=1\}$, the set of those vertices not in $U$ that have exactly one neighbor in $U$.

We denote by $G_U$ the subgraph of $G$ induced by $U\subseteq V$. 

Let $\nabla(u)$ denote the \emph{multiset of integers} $\{d(v,V\setminus\tilde{N}(u)) \mid v\in N(u)\}$.

We denote the vertex set of $G$'s $2$-core by $R(G)$.

\paragraph{Asymptotics:} A property of $G(n,p)$ graphs is said to hold {\em whp} ({\em with high probability}) if its probability tends to $1$ as $n\to \infty$.

\paragraph{Probability:} For a discrete random variable $X$, let
\[
\Pi(X)=\sup_{x\in\text{range}(X)}(\Pr(X=x)).
\]

If $X$ is multinomial with parameters $(m,(p_{1},\ldots,p_{k}))$, we denote $\Pi(X)$ by $\Pi(m,(p_1 ,\ldots ,p_k ))$. The following lemmas provide a description of $\Pi(m,(p_1 ,\ldots ,p_k ))$.

\begin{lemma}
\label{lem:MultinomialMaximizer}Let $X$ be a multinomial random variable with parameters $(m,(p_{1},\ldots,p_{k}))$ and suppose that $\Pi(X)=\Pr(X=(a_{1},\ldots a_{k}))$. Then, $a_{t}>mp_{t}-1$ for every $t$, or, in other words $a_{t}\ge \lfloor mp_{t}\rfloor$.\end{lemma}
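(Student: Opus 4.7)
The natural approach is a local exchange (or ``hill-climbing'') argument. Suppose for contradiction that the asserted inequality fails at some coordinate $t$, so that $a_t + 1 \le mp_t$. Since $\sum_i a_i = m = \sum_i m p_i$, the surpluses and deficits of $(a_i - mp_i)$ sum to zero, and hence there must exist some index $s$ with $a_s > mp_s \ge 0$; in particular $a_s \ge 1$. Consider the neighboring tuple $(a_1', \ldots, a_k')$ obtained by incrementing $a_t$ by one and decrementing $a_s$ by one, leaving all other coordinates unchanged. This tuple is a valid outcome of $X$ since its entries are nonnegative and still sum to $m$.

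The plan is then to compute the ratio of multinomial probabilities explicitly:
\[
\frac{\Pr(X = (a_1', \ldots, a_k'))}{\Pr(X = (a_1, \ldots, a_k))} \;=\; \frac{a_s\, p_t}{(a_t+1)\, p_s},
\]
all other factorial and probability factors canceling. The two standing inequalities $a_s > mp_s$ and $a_t + 1 \le mp_t$ combine to give
\[
a_s\, p_t \;>\; m p_s p_t \;\ge\; (a_t+1)\, p_s,
\]
so the displayed ratio strictly exceeds $1$. This contradicts the assumption that $(a_1,\ldots,a_k)$ attains $\Pi(X)$, completing the proof.

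Because the argument is a one-step local improvement, there is no real obstacle. The only subtleties are bookkeeping ones: verifying that the swap stays inside the support (handled by $a_s \ge 1$), and degenerate cases where some $p_j = 0$, which can be dispatched by restricting attention to the coordinates with positive probability (there $a_j = 0$ deterministically, and the claim $a_j \ge \lfloor m p_j \rfloor = 0$ is automatic).
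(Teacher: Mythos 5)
Your proof is correct and is essentially identical to the paper's: the same one-step exchange argument, finding a surplus coordinate $s$ and showing the probability ratio after transferring one unit from $a_s$ to $a_t$ strictly exceeds $1$. The only (harmless) addition is your explicit handling of coordinates with $p_j=0$, which the paper leaves implicit.
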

\begin{proof}
Without loss of generality, assume by contradiction that $a_{1}\leq m\cdot p_{1}-1$.
Then, since $\sum_{i}a_{i}=m=\sum_{i}m\cdot p_{i}$, there exists
some index $s$, say $s=2$ such that $a_{s}>m\cdot p_{s}$.
\[
\frac{\Pr(X=(a_{1}+1,a_{2}-1,a_{3},\ldots,a_{k}))}{\Pr(X=(a_{1},\ldots,a_{k}))}=\frac{a_{2}}{a_{1}+1}\cdot\frac{p_{1}}{p_{2}}>\frac{m\cdot p_{2}}{m\cdot p_{1}}\cdot\frac{p_{1}}{p_{2}}=1
\]
contrary to the assumed maximality of $\Pr(X=(a_{1},\ldots,a_{k}))$.\end{proof}

\begin{lemma}
\label{lem:MultinomialBound}For an integer $m$, a constant $c>0$ and a probability vector ${\bf p}=(p_1 ,\ldots ,p_k)$, such that $p_i \leq \frac{c}{\sqrt m}$ for each $i$, it holds that
\[
\Pi(m,{\bf p})\leq m^{-\Omega\left(\frac{\sqrt{m}}{c}\right)}.
\]
\end{lemma}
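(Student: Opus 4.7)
The plan is to reduce to a balanced multinomial and then combine Stirling with Gibbs' inequality. Write $q = c/\sqrt m$. Since each $p_i \leq q$, a greedy procedure partitions $[k]$ into groups $G_1,\ldots,G_{k'}$ whose aggregated probabilities $q_j := \sum_{i \in G_j} p_i$ lie in $[q, 3q]$ (add $p_i$'s to the current group and close it as soon as its mass crosses $q$, finally merging any short trailing remainder into its predecessor). The constraint $\sum_j q_j = 1$ together with $q_j \leq 3q$ forces $k' \geq 1/(3q) = \Omega(\sqrt m/c)$. Setting $Y_j := \sum_{i \in G_j} X_i$, the vector $Y = (Y_1,\ldots,Y_{k'})$ is multinomial with parameters $(m, (q_1,\ldots,q_{k'}))$. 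Each atom of $X$ lies in a unique atom of $Y$, giving $\Pi(X) \leq \Pi(Y)$; it thus suffices to bound the latter.

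Let $\hat a$ be a mode of $Y$. Lemma~\ref{lem:MultinomialMaximizer} gives $\hat a_j \geq \lfloor m q_j\rfloor \geq \lfloor c\sqrt m\rfloor$, so for $m$ large enough relative to $c$ every $\hat a_j \geq c\sqrt m/2$. Inserting the Stirling bounds $m! \leq O(\sqrt m)(m/e)^m$ and $a! \geq \sqrt{a}\,(a/e)^a$ into
\[
\Pr(Y = \hat a) = \frac{m!}{\prod_j \hat a_j!}\,\prod_j q_j^{\hat a_j}
\]
collapses the $e^{-m}$ contributions via $\sum_j \hat a_j = m$ and leaves
\[
\Pr(Y = \hat a) \;\leq\; \frac{O(\sqrt m)}{\prod_j \sqrt{\hat a_j}} \cdot \prod_j \left(\frac{m q_j}{\hat a_j}\right)^{\hat a_j}.
\]
The final product equals $\exp\!\bigl(-m\, D(\hat a/m\,\|\,\mathbf q)\bigr)$ and is therefore at most $1$ by Gibbs' inequality. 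Plugging in $\hat a_j \geq c\sqrt m/2$ and $k' \geq \sqrt m/(3c)$ gives
\[
\Pr(Y = \hat a) \;\leq\; O(\sqrt m)\,(c\sqrt m/2)^{-k'/2} \;=\; m^{-\Omega(\sqrt m/c)},
\]
as required.

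The main obstacle is that for a highly non-uniform $\mathbf p$ many coordinates may have $p_i \ll 1/m$, forcing $a_i = 0$ at the mode and making a direct Stirling expansion of the original multinomial awkward. The balancing step circumvents this by coarsening $\mathbf p$ into an approximately uniform distribution on $\Omega(\sqrt m/c)$ atoms; once that is done, Gibbs' inequality absorbs the mismatch between $\hat a$ and $m\mathbf q$ for free, and the $\prod 1/\sqrt{\hat a_j}$ factor alone delivers the advertised bound.
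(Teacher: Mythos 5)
Your proof is correct and follows the same two-stage skeleton as the paper's: first merge coordinates into groups of total mass $\Theta(c/\sqrt m)$, observing that merging can only increase $\Pi$, and then apply Stirling to a mode of the resulting balanced multinomial. The differences are local but genuine. For the reduction you use a one-pass greedy grouping (close a group once its mass crosses $q=c/\sqrt m$, fold the short remainder into its predecessor), whereas the paper repeatedly merges the two smallest coordinates; both yield groups of mass between $q$ and $O(q)$ and hence $k'=\Omega(\sqrt m/c)$ groups, so this is a matter of taste. The more substantive difference is in the balanced case: the paper bounds the product $\prod_j(mq_j/\hat a_j)^{\hat a_j}$ by $e^{O(\sqrt m/c)}$ via the mode inequality of Lemma~\ref{lem:MultinomialMaximizer} and then checks that this factor is swamped by $\prod_j \hat a_j^{-1/2}$, while you observe that this product equals $\exp\bigl(-m\,D(\hat a/m\,\|\,\mathbf q)\bigr)\le 1$ by Gibbs' inequality, valid for \emph{any} $\hat a$. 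This is sharper and cleaner, and it confines the use of Lemma~\ref{lem:MultinomialMaximizer} to the single place it is really needed, namely the lower bound $\hat a_j\ge \lfloor mq_j\rfloor = \Omega(c\sqrt m)$ that makes $\prod_j \hat a_j^{-1/2} \le m^{-\Omega(\sqrt m/c)}$. Both arguments deliver the stated bound; yours does so with slightly less bookkeeping.
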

\begin{proof}
We first show how to reduce the proof to the case where $p_i \geq \frac {c}{3\sqrt m}$ for each $i$. Assume that the lemma holds in this case. For a real vector ${\bf u}$ and two coordinate indices $i\neq j$, let ${\bf u}_{i,j}$ be the vector obtained by eliminating the coordinates $u_i, u_j$ and introducing a new coordinate of $u_i+u_j$. Let $X$ and $X_{i,j}$ be multinomial random variables with parameters $(m,{\bf p})$, $(m,{\bf p}_{i,j})$, respectively. Note that for every ${\bf a} \in \text{range}(X)$ there holds
\[
\Pr(X={\bf a})\leq\Pr(X_{i,j}={\bf a}_{i,j}).
\]
Thus, $\Pi(m,{\bf p})\leq \Pi({\bf p}_{i,j})$.

We generate a sequence of probability vectors that start from ${\bf p}$ and proceed as follows. At each step we replace, as described, the two smallest coordinates in the present probability vector by one coordinate that is their sum. We continue with this process until the first time at which this vector ${\bf q}$ has at most one coordinate that is smaller than $\frac{c}{2 \sqrt m}$. If the smallest coordinate in ${\bf q}$ is $\ge\frac{c}{3 \sqrt m}$, then, since each of the above steps can only increase $\Pi$, the reduction is complete.
Otherwise, ${\bf q}$ has exactly one coordinate, say $q_1$ that is $< \frac{c}{2 \sqrt m}$. But then in ${\bf q}_{1,2}$ all coordinates vary between $\frac{c}{2 \sqrt m}$ and $\frac{3c}{2 \sqrt m}$. The reduction is again complete.

We now turn to proving the lemma for the case where $\frac{c}{\sqrt{m}}\ge p_i \geq \frac{c}{3\sqrt m}$ for $i=1,\ldots,k$. Clearly $k\geq\frac{\sqrt{m}}{c}$.
Set $\mu_{i}=p_{i}\cdot m$ and suppose that $\Pi(X)=\Pr(X=(a_{1},\ldots,a_{k})).$ By Lemma \ref{lem:MultinomialMaximizer}, $\frac{a_{i}}{\mu_{i}}\ge1-\frac{1}{\mu_{i}}\geq1-\frac{3}{c\sqrt{m}}$ for all $i$.
Now
\[
\Pi(X)=\Pr(X=(a_{1},\ldots,a_{k}))={m \choose a_{1},\ldots,a_{k}}\cdot\prod_{i}p_{i}^{a_{i}}.
\]
By Stirling's bound, $\frac{n!}{\left(\frac{n}{e}\right)^{n}\sqrt{2\pi n}}=1+O(\frac{1}{n})$.
Thus,
\begin{eqnarray*}
\Pi(X) & \leq & O\left(\frac{\left(\frac{m}{e}\right)^{m}\sqrt{2\pi m}}{\prod_{i}\left(\frac{a_{i}}{e\cdot p_{i}}\right)^{a_{i}}\cdot\sqrt{2\pi a_i}}\right),
\end{eqnarray*}
which can be stated as
\begin{eqnarray*}
\Pi(X) & \leq & O\left(\frac{\left(\frac{m}{e}\right)^{m}\sqrt{2\pi m}}{\prod_{i}\left(\frac{\mu_{i}}{e\cdot p_{i}}\right)^{a_{i}}\cdot\sqrt{2\pi a_i}}\cdot\prod_{i}\left(\frac{\mu_{i}}{a_{i}}\right)^{a_{i}}\right)
=  O\left(\frac{\sqrt{2\pi m}}{\prod_{i}\sqrt{2\pi a_i}}\cdot\prod_{i}\left(\frac{\mu_{i}}{a_{i}}\right)^{a_{i}}\right).
\end{eqnarray*}
But
\[
\prod_{i}\left(\frac{\mu_{i}}{a_{i}}\right)^{a_{i}}\leq\left(1+\frac{4}{c\sqrt{m}}\right)^{m}\leq e^{\frac{4\sqrt{m}}{c}}
\]
and
\[
\frac{1}{\prod_{i}\sqrt{a_{i}}}\leq\left(\frac{c}{3}\sqrt{m}-1\right)^{-\frac{k}{2}}\leq \left(\frac{c}{3}\sqrt{m}-1\right)^{-\frac{\sqrt{m}}{2c}}\leq O\left(\left(\frac{c^{2}m}{9}\right)^{-\frac{\sqrt{m}}{4c}}\right).
\]
Therefore,
\[
\Pi(X)\leq O\left( (2\pi)^{-k/2}\sqrt m \cdot e^{\frac{4\sqrt m}{c}} \cdot \left(\frac{c^2 m}{9}\right)^{-\frac{4\sqrt m}{c}}\right)\le m^{-\Omega\left(\frac{\sqrt{m}}{c}\right)}.
\]

 \end{proof}

\begin{lemma}\label{Lem:MultinomialWithBinomialProbsBound}
Let $k$ be an integer, $\frac{1}{2}\ge p>0$ and let $p_{i}={k \choose i}p^{i}q^{k-i}~~~(i=0,1\ldots,k)$, where $q=1-p$.
Then, for every $m\le O(kp)$ there holds
\[
\Pi(m,(p_{0},\ldots,p_{k}))\leq m^{-\Omega(\sqrt{m})}.
\]
\end{lemma}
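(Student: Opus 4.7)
The plan is to reduce the statement to Lemma~\ref{lem:MultinomialBound}. That lemma will yield the desired bound $\Pi(m,{\bf p})\le m^{-\Omega(\sqrt m)}$ provided I can verify its hypothesis: namely, that each coordinate $p_i$ of ${\bf p}=(p_0,\ldots,p_k)$ is at most $c/\sqrt m$ for some constant $c$. I first dispose of the uninteresting range: if $m$ is bounded by an absolute constant then $m^{-\Omega(\sqrt m)}$ is itself $\Omega(1)$ and the claim is vacuous, so I may assume $m\to\infty$. Via the hypothesis $m\le O(kp)$, this forces $kp\to\infty$ as well, so all estimates below may freely assume that $kp$ is as large as needed.

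To bound $\max_i p_i$, I use that the binomial distribution is unimodal with mode $i^{*}=\lfloor (k+1)p\rfloor$, so it suffices to estimate $p_{i^{*}}$. A routine application of Stirling's formula $n!=(1+O(1/n))\sqrt{2\pi n}(n/e)^n$ to the three factorials in $\binom{k}{i^{*}}$ gives
\[
p_{i^{*}}\;=\;\frac{1+o(1)}{\sqrt{2\pi kpq}},
\]
and since the assumption $p\le \tfrac12$ forces $q\ge \tfrac12$, this yields $\max_i p_i \le C_1/\sqrt{kp}$ for an absolute constant $C_1$. Combining with $kp\ge m/C_2$, which comes from $m\le C_2\cdot kp$, produces
\[
\max_i p_i \;\le\; \frac{C_1\sqrt{C_2}}{\sqrt m} \;=:\; \frac{c}{\sqrt m},
\]
exactly the hypothesis required by Lemma~\ref{lem:MultinomialBound}. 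Applying that lemma to ${\bf p}$ with this value of $c$ then yields $\Pi(m,{\bf p})\le m^{-\Omega(\sqrt m/c)} = m^{-\Omega(\sqrt m)}$, as desired.

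I do not foresee any real obstacle in this plan. The only computation of substance is the Stirling estimate $p_{i^{*}}=(1+o(1))/\sqrt{2\pi kpq}$ for the peak of a binomial, which is entirely standard; everything else is a matching of constants so that the hypothesis $p_i\le c/\sqrt m$ of Lemma~\ref{lem:MultinomialBound} is met under the assumption $m\le O(kp)$.
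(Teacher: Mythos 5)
Your proposal is correct and follows the same route as the paper: both reduce to Lemma~\ref{lem:MultinomialBound} by bounding the largest binomial probability by $O(1/\sqrt{kp})=O(1/\sqrt{m})$ (the paper cites this peak bound as well known, while you derive it via Stirling, but the argument is the same).
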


\begin{proof}
It is well known that
$$
\Pi(k,(p,q)) \leq O\left(\left( \frac{1}{pk-1}\right)^{1/2}\right)\leq O\left(\frac 1{\sqrt m}\right).
$$
Therefore, by Lemma \ref{lem:MultinomialBound},
$$
\Pi(m,(p_{0},\ldots,p_{k}))\leq m^{-\Omega(\sqrt{m})}.
$$
\end{proof}

\section{The Main Theorem}
We recall that $R(G)$ stands for $G$'s $2$-core. We also denote $\tilde R := V\setminus R(G)$.

\begin{lemma} \label{lem:Humongous2Core}
Let $G$ be a $G(n, p)$ graph where $p>\omega(\frac{1}{n})$. For every $\frac{n}{10}>x>\frac{n}{e^{np}}$ there holds
$$\Pr(|\tilde R|\geq x) < e^{-\Omega(npx)}.$$
\end{lemma}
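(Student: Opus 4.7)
The plan is to produce a combinatorial witness for the event $\{|\tilde R|\ge x\}$ and apply a first-moment bound. Let $T$ be the set of the first $x$ vertices removed during the greedy peeling that produces the $2$-core. By the definition of peeling, $T$ is \emph{peelable from $V\setminus T$}: it admits an ordering $v_1,\dots,v_x$ with $d(v_i,(V\setminus T)\cup\{v_{i+1},\dots,v_x\})\le 1$ for every $i$. Equivalently, $G_T$ is a forest, and each of its connected components has at most one edge leaving $T$. Hence, by symmetry,
\[
\Pr(|\tilde R|\ge x)\le\binom{n}{x}\cdot\Pr(T\text{ is peelable})
\]
for any fixed $T$ of size $x$.

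Next I would bound $\Pr(T\text{ peelable})$. The edges inside $T$ and the edges across the cut $(T,V\setminus T)$ are independent, so the two halves of the peelability condition decouple. The ``at most one external edge per component'' constraint forces the total number of crossing edges (whose expectation is $x(n-x)p\ge 0.9\,xnp$) to be at most the number of components $C\le x$, and a Chernoff estimate on $\text{Bin}(x(n-x),p)$ yields a factor of $e^{-(1-o(1))\,x(n-x)p}$. The ``$G_T$ is a forest'' constraint adds a further factor $e^{-\Omega(x^2p)}$ via Chernoff on $\text{Bin}(\binom{x}{2},p)$. Summing over forest structures on $T$ using Cayley's $s^{s-2}$ and the tree generating identity $T(z)=ze^{T(z)}$ to absorb the combinatorial overcount coming from the different component partitions and anchor choices, the upshot is an estimate of the form $\Pr(T\text{ peelable})\le\exp(-(1+\Omega(1))\,xnp)$.

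Finally, $\binom{n}{x}\le(en/x)^x$, and the hypothesis $x>n/e^{np}$ is equivalent to $\log(n/x)<np$, so $\log\binom{n}{x}\le x(1+np)$. Combining with the previous step,
\[
\Pr(|\tilde R|\ge x)\le\exp\bigl(x(1+np)-(1+\Omega(1))\,xnp\bigr)=e^{-\Omega(xnp)}.
\]
The main obstacle is the tight accounting for $\Pr(T\text{ peelable})$: a crude Chernoff bound on the number of edges incident to $T$ alone gives only $e^{-(1-o(1))\,xnp}$, which does not beat the $\binom{n}{x}$ union-bound factor when $\log(n/x)$ is close to $np$. The critical improvement must come from jointly exploiting the ``forest inside $T$'' and ``scarce cut edges'' constraints, together with $x\le n/10$, so that the final exponent strictly exceeds $xnp$ by a constant factor.
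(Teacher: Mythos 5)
Your reduction is sound and genuinely different from the paper's: if $|\tilde R|\ge x$ then the first $x$ peeled vertices form a set $T$ such that $G_T$ is a forest each of whose components sends at most one edge to $V\setminus T$, and a union bound over $\binom{n}{x}$ choices of $T$ is legitimate. The gap is exactly the step you flag as ``the main obstacle'': the hoped-for estimate $\Pr(T\text{ peelable})\le e^{-(1+\Omega(1))xnp}$ does not exist. The cut-edge constraint is worth at most $\Pr\bigl(\text{Bin}(x(n-x),p)\le x\bigr)\le e^{-(1-o(1))x(n-x)p}\le e^{-(1-o(1))xnp}$, while the forest constraint is worth at most $e^{-O(x^2p)}$, and $x^2p=o(xnp)$ whenever $x=o(n)$ --- in particular throughout the critical regime $x$ near $n e^{-np}$, where $x^2p\approx xnp\cdot e^{-np}$. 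So the product never beats $e^{-(1+o(1))xnp}$, and after multiplying by $\binom{n}{x}=e^{(1+o(1))xnp}$ (which is all the hypothesis $x>n/e^{np}$ gives at its boundary) you are left with $e^{o(xnp)}$, not $e^{-\Omega(xnp)}$. Moreover, no argument can close this gap at the bottom of the stated range: the number of degree-one vertices is concentrated around $n\cdot np\cdot e^{-np}\gg ne^{-np}$ (in the paper's range $p\le n^{-1/2-\epsilon}$), all of them lie outside the $2$-core, so for, say, $x=2ne^{-np}$ the event $|\tilde R|\ge x$ actually holds whp. Any version of this lemma that is provable needs the stronger hypothesis $\log(n/x)\le(1-\epsilon)np$, or equivalently an extra $\text{poly}(np)$ in the threshold for $x$.

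For comparison, the paper avoids the peeling witness altogether: since $\tilde R$ induces a forest and each of its vertices has at most one neighbour in the $2$-core, the degree sum over $\tilde R$ is less than $3|\tilde R|$, so at least $|\tilde R|/4$ vertices of $\tilde R$ have degree at most $3$ in $G$; one then union-bounds over sets $A$ of size $x/4$ the event that every $v\in A$ satisfies $d(v,V\setminus A)\le 3$, using independence of the stars $E(v,V\setminus A)$. This needs only $x/4$ witnesses against $\binom{n}{x/4}$ choices, but its per-witness accounting ($\ln(n/x)$ against $(1-o(1))np$) is just as tight at $x=ne^{-np}$; it too really establishes the bound only when $\log(n/x)\le(1-\epsilon)np$, which is all that is needed for the one application in the paper ($x=n(np)^{-\log(np)}$). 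If you restate your lemma for $x\ge ne^{-np/2}$, say, then $\binom{n}{x}\le e^{(1/2+o(1))xnp}$ and your crude cut-edge Chernoff bound alone already gives $e^{-\Omega(xnp)}$, with no need for the forest refinement or the Cayley-formula bookkeeping.
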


\begin{proof}
Let $S\subseteq V$ be the set of those vertices in $G$ with degree at most $3$. We claim that $|S| \geq \frac{|\tilde R|}{4}$. Clearly $|E(\tilde R)|< |\tilde R|$, since $\tilde R$ is acyclic. Also, $d(\tilde R,R) \leq |\tilde R|$ since a vertex in $\tilde R$ can have at most one neighbor in $R$. Hence,
$$4|\tilde R|-4|S|\leq 4|\tilde R\setminus S|\leq \sum_{v\in \tilde R} d(v) = 2|E(\tilde R)|+d(\tilde R,R) < 3|\tilde R|,$$
as claimed. Thus, it is enough to bound the probability that $|S|\geq \frac 14x$. We fix a set $A$ of $\frac{x}{4}$ vertices and note that a vertex $v \in A$ has $d(v)< 4$ only if $d(v,V\setminus A) < 4$, which holds with probability $\leq e^{-\Omega(np)}$. Thus, the probability that all vertices in $A$ have degree $\leq 4$ is at most $e^{-\Omega(npx)}$. Therefore, the probability that such a set $A$ exists is at most
$$\binom{n}{\frac 14x}e^{-\Omega(npx)} = e^{-\Omega(npx)}$$
finishing the proof.
\end{proof}

\begin{definition}
Let $G=(V,E)$ be an $n$-vertex graph, and $k\ge 3$ an integer. An order $k$ {\em configuration} of $G$ is a pair of functions $(\phi,\psi):[k]\to V$. If $\phi(i)=\psi(i)$ we say that $i$ is a {\em confluence} of $(\phi,\psi)$.

\begin{itemize}
\item A confluence-free configuration $(\phi,\psi)$ is said to have \emph{type I}  when $k\leq\log n$ and $\left(\phi(1),\ldots,\phi(k),\phi(1)\right)$ and $\left(\psi(1),\ldots,\psi(k),\psi(1)\right)$ are simple cycles (in this order).
\item We say that $(\phi,\psi)$ is a \emph{type II} configuration when $\left(\phi(1),\ldots,\phi(k)\right)$ and $\left(\psi(1),\ldots,\psi(k)\right)$ are each a simple path or a simple cycle. Also, $k\leq\log n$, and $1, k$ are the only confluences.
\end{itemize}
\end{definition}
\begin{lemma}
\label{lem:ConfigurationBound}
Let $G=(V,E)$ be a random $G(n,p)$ graph and let $k\leq \log n$. Pick the functions $\phi,\psi:[k]\to V$ uniformly at random. Consider the events
\begin{itemize}
\item
$C_1$ that $(\phi,\psi)$ is a type I configuration.
\item
$C_2$ that $(\phi,\psi)$ is a type II configuration.
\end{itemize}
Then:
\begin{enumerate}
\item
$\Pr(C_1)\leq p^{k}\cdot(\frac{2}{n}+p)^{k}$
\item
$\Pr(C_2)\leq p^k\cdot(\frac{2}{n}+p)^{k-2}\cdot n^{-2}$
\end{enumerate}
\end{lemma}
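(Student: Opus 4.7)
The plan is to condition on $\phi$ first and then bound the conditional probability over a uniformly random $\psi$ by an edge-by-edge expansion.

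For part~(1), the probability that $\phi$ is an injective simple cycle in $G$ is at most $p^k$: the fraction of injective $\phi:[k]\to V$ is $(n)_k/n^k\le 1$, and the $k$ cycle-edges then exist independently with probability $p^k$. Condition on this event, so the $k$ edges of the $\phi$-cycle are fixed in $G$ while the remaining potential edges are independent Bernoulli$(p)$. For $i\in[k]$ (cyclically) set
\[
X_i := \mathbbm{1}\bigl[\{\psi(i),\psi(i+1)\}\text{ is an edge of the }\phi\text{-cycle}\bigr].
\]
For injective $\psi$, the $k$ edges of $\psi$'s cycle are distinct, so conditionally on $\phi$ and $\psi$,
\[
\Pr\!\bigl[\text{every }\psi\text{-cycle edge lies in }E(G)\bigr]\;=\;\prod_{i=1}^{k}\bigl(p+(1-p)X_i\bigr),
\]
because a $\psi$-edge coinciding with a $\phi$-edge is present for sure while a fresh edge is present with independent probability $p$. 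Bounding the injectivity indicator by $1$ and expanding the product yields
\[
\Pr(C_1)\;\le\;p^k\sum_{T\subseteq[k]} p^{k-|T|}(1-p)^{|T|}\,\Pr_\psi\!\bigl[X_i=1\text{ for all }i\in T\bigr].
\]

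The technical heart of the argument is the uniform bound $\Pr_\psi[X_i=1\text{ for all }i\in T]\le(2/n)^{|T|}$ for every $T\subseteq[k]$. I would prove this by decomposing $T$ into its maximal cyclically-consecutive runs. A run of length $\ell$ forces the corresponding $\ell+1$ consecutive $\psi$-vertices to trace a walk of length $\ell$ on the $\phi$-cycle; since each cycle-vertex has exactly two cycle-neighbours, there are at most $k\cdot 2^\ell$ such walks out of $n^{\ell+1}$ placements, giving probability at most $(k/n)(2/n)^\ell\le(2/n)^\ell$. Different runs involve disjoint $\psi$-variables and their probabilities multiply. The cyclic wrap-around case $T=[k]$, where $\psi$'s cycle must coincide with the $\phi$-cycle as a set of edges, is treated directly: there are exactly $2k$ such $\psi$'s, yielding $2k/n^k\le(2/n)^k$ (using $k\ge 2$). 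Substituting collapses the expansion to $(p+2(1-p)/n)^k\le(p+2/n)^k$, completing part~(1).

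For part~(2) the same expansion is used. The confluences $\psi(1)=\phi(1)$ and $\psi(k)=\phi(k)$ are two independent uniform matches, contributing the factor $n^{-2}$. With $\psi(1)$ and $\psi(k)$ now pinned to specific $\phi$-vertices, any run of $T$ touching position $1$ or $k$ already has a fixed endpoint, so the per-run bound improves at both boundaries — enough that, after absorbing the boundary improvements into the $p^kn^{-2}$ prefix, only $k-2$ interior edges contribute $(p+2/n)$-factors. A short case analysis over the four path/cycle possibilities for $\phi$ and $\psi$, tracking the number of edges whose existence is required, then yields $\Pr(C_2)\le p^k(p+2/n)^{k-2}n^{-2}$.

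The main obstacle is the uniform estimate on $\Pr_\psi[X_i=1\text{ for all }i\in T]$, particularly the cyclic wrap-around case where the naive per-edge $(2/n)$-bound does not literally apply to each edge and the inequality $2k\le 2^k$ must be used instead. The secondary obstacle, in part~(2), is the bookkeeping that merges the four path/cycle cases at the two confluence positions into the single claimed bound.
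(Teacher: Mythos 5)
Your part (1) is correct and reaches the paper's bound by a genuinely different route. The paper argues sequentially: conditioning on the $\phi$-cycle being present and on the first $i$ $\psi$-edges being present, it shows that the next $\psi$-edge appears with probability at most $\frac{2}{n}+p$ by revealing the next $\psi$-coordinate (the new edge is either one of the at most two $\phi$-cycle edges at the current vertex, probability $\le \frac 2n$ over that fresh uniform coordinate, or a fresh Bernoulli-$p$ edge). You instead fix $(\phi,\psi)$ entirely, write the conditional probability as the exact product $\prod_i\left(p+(1-p)X_i\right)$, and control its expectation over $\psi$ by expanding over coincidence patterns $T$ and counting walks on the cycle. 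Both yield the per-edge factor $p+\frac 2n$; the paper's version is shorter, while yours is more careful about exactly which edges are forced --- in particular, restricting to injective $\psi$ makes the product formula exact and sidesteps the possibility (glossed over in the paper's telescoping step) that a new $\psi$-edge coincides with an \emph{earlier $\psi$-edge} rather than a $\phi$-edge. Your run decomposition, the disjointness of the $\psi$-coordinates across maximal cyclic runs, and the separate treatment of $T=[k]$ via $2k\le 2^k$ are all sound.

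Part (2) is where your sketch has a real gap, though to be fair the paper itself says only that ``the same argument applies.'' The $n^{-2}$ from the two confluences is right, but the mechanism you name for reducing the exponent from $k$ (or $k-1$) to $k-2$ --- that a run of $T$ touching position $1$ or $k$ has a pinned endpoint and hence an improved per-run bound --- yields no saving at all: pinning the walk's start replaces the factor $\frac kn\cdot\left(\frac 2n\right)^{\ell}$ by $\left(\frac 2n\right)^{\ell}$, i.e.\ it merely recovers the slack $\frac kn\le 1$ that your part (1) already discards, so every run still contributes exactly $\left(\frac 2n\right)^{|T\cap\text{run}|}$ and you end up with $p^{k-1}\left(\frac 2n+p\right)^{k-1}n^{-2}$, which is \emph{weaker} than the stated $p^{k}\left(\frac 2n+p\right)^{k-2}n^{-2}$. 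The saving actually comes from the no-confluence condition at indices $2$ and $k-1$: for instance, when $\phi$ traces a simple path, $\psi(1)=\phi(1)$ has the single $\phi$-neighbour $\phi(2)$, so the edge $\psi(1)\psi(2)$ can coincide with a $\phi$-edge only if $\psi(2)=\phi(2)$, which is forbidden; hence that edge is necessarily fresh and contributes a clean factor $p$ rather than $p+\frac 2n$, and symmetrically at the other end. Carrying this constraint through the path/cycle cases is what produces the stated exponents; note also that the resulting discrepancies are only factors of $\left(1+\frac{2}{np}\right)^{O(1)}=1+o(1)$, which is immaterial for every downstream use of the lemma, but your boundary argument as written does not prove the inequality as stated.
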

\begin{proof}
We only prove the first claim. The same argument applies as well to the second case.\\
Denote $\phi(k+1)=\phi(1)$ and $\psi(k+1)=\psi(1)$.
For $i=0,\ldots,k$, we estimate the probability of the events $A_{i}$ that $\psi(j)\psi(j+1)\in E$ for every $1\leq j\leq i$ and $(\phi(1),\ldots,\phi(k+1))$ is a simple cycle in $G$. Clearly, $\Pr(A_0)=\Pr\left(\phi(1),\ldots,\phi(k+1)\text{ is a simple cycle}\right) \leq p^{k}$.

We complete the proof by showing that $\Pr(A_{i+1}|A_{i})\leq\frac{2}{n}+p$. Indeed, suppose that $\psi(i)=\phi(j)$ for some $j$. In this case it is possible that $\psi(i),\psi(i+1)$ are neighbors since $\psi(i+1)$ coincides with either $\phi(j-1)$ or with $\phi(j+1)$, but that happens with probability $\le \frac 2n$. Otherwise, they are neighbors with probability $p$. \end{proof}

\begin{lemma}
\label{lem:CompatibleBound}Let $\omega(\frac 1n)\le p=p(n)\le O(n^{-0.5-\epsilon})$ for some $\epsilon > 0$. Pick a random $G(n,p)$ graph $G=(V,E)$ and two random maps $\phi,\psi:[k]\to V$ where $k\leq \log n $. Let $s$ denote the number of indices $i\in\{1,\ldots,k\}$ such that $\nabla(\phi(i))=\nabla(\psi(i))$.
Then:
\[
\Pr\left(s > \frac 14k\mid C_1\right)\leq(pn)^{-\Omega(\sqrt{pn}\cdot k)}
\]
\[
\Pr\left(s > \frac 14(k-2)\mid C_2\right)\leq(pn)^{-\Omega(\sqrt{pn}\cdot k)}
\]
\end{lemma}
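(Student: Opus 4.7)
The core estimate is a per-vertex anti-concentration statement for $\nabla$. Fix a vertex $u\in V$ and expose every edge of $G$ outside the set $E_u := \{vw : v\in N(u),\, w\notin \tilde N(u)\}$. Every edge incident to $u$ lies outside $E_u$, so the exposure pins down $N(u)$; moreover $E_u$ partitions by its endpoint in $N(u)$ into $|N(u)|$ groups of mutually disjoint, still-random Bernoulli$(p)$ edges. Hence, conditional on the exposed edges, $\nabla(u)$ is distributed as the multiset of $d:=|N(u)|$ i.i.d.\ $\mathrm{Binom}(n-d-1,p)$ variables. Provided $d=\Theta(np)$ (failure probability $\exp(-\Omega(np))$, negligible against our target), Lemma~\ref{Lem:MultinomialWithBinomialProbsBound} with its $m$ set to $d$ delivers
\[
\Pr\bigl(\nabla(u)=\mu \,\big|\, \text{exposed edges}\bigr) \;\le\; d^{-\Omega(\sqrt d)} \;=\; (np)^{-\Omega(\sqrt{np})}
\]
for every fixed multiset $\mu$.

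To obtain the joint bound, I will union-bound over the $\le 2^k$ subsets $T=\{i_1,\ldots,i_\ell\}\subseteq[k]$ of size $\ell=\lceil k/4\rceil+1$ and argue that $\Pr(\text{every } i\in T\text{ is compatible}\mid C_1)\le(np)^{-\Omega(\sqrt{np}\,\ell)}$; the $2^k$ overhead is absorbed since $\sqrt{np}\log(np)=\omega(1)$. For each fixed $T$ the indices $i_1,\ldots,i_\ell$ are processed sequentially, and at step $j$ I maintain a filtration $\mathcal F_{j-1}$ that has exposed (i) enough of $G$ to decide compatibility at $i_1,\ldots,i_{j-1}$ and to evaluate $\nabla(\psi(i_j))$, while (ii) leaving every edge of $E_{\phi(i_j)}$ untouched. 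The single-vertex bound applied at $u=\phi(i_j)$ then yields $\Pr(\nabla(\phi(i_j))=\nabla(\psi(i_j))\mid\mathcal F_{j-1})\le (np)^{-\Omega(\sqrt{np})}$, and the chain rule over $j$ compounds these into the desired exponent. The type II case is handled identically, except that the two forced confluences $1,k$ shrink the effective window to indices $\{2,\ldots,k-1\}$, yielding the threshold $\lceil(k-2)/4\rceil+1$.

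\textbf{Main obstacle.} The subtle point is constructing $\mathcal F_{j-1}$ so that (i) and (ii) are simultaneously possible: edges in $E_{\phi(i_j)}$ may have been needed — and thus exposed — at an earlier step, either through a common neighbor $v\in N(\phi(i_j))\cap N(\phi(i_{j'}))$ or through a second-layer edge $vw$ with $v\in N(\phi(i_j))$ and $w\in N(\phi(i_{j'}))$. The plan is to show that, conditional on $C_1$, the number of neighbors of $\phi(i_j)$ whose outward contribution to $\nabla(\phi(i_j))$ is \emph{already} pinned down by $\mathcal F_{j-1}$ is tiny (its conditional expectation is $O(k\cdot np^2)=o(1)$ when $p\le n^{-1/2-\epsilon}$). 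Hence all but $O(1)$ of the $d$ Binomial coordinates remain fresh, and a truncated application of Lemma~\ref{Lem:MultinomialWithBinomialProbsBound} to those $d-O(1)$ fresh coordinates still yields $(np)^{-\Omega(\sqrt{np})}$ per step. Making this truncation precise — in particular, controlling the rare event that many coordinates are contaminated so that its contribution to $\Pr(s>k/4\mid C_1)$ stays below the target — is the principal technical hurdle.
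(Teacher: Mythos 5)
Your core estimate is the same as the paper's: the multiset $\nabla(u)$ restricted to the ``fresh'' neighbors of $u$ is a family of i.i.d.\ binomials, and Lemma~\ref{Lem:MultinomialWithBinomialProbsBound} with $m=\Theta(np)$ gives $(np)^{-\Omega(\sqrt{np})}$ per index; the subset union bound at the end is also how the paper concludes. The difference, and the gap, is entirely in the step you yourself flag as the main obstacle. Your contamination count $O(k\cdot np^2)=o(1)$ only covers common neighbors, i.e.\ coordinates that are \emph{fully} pinned down. But to evaluate $\nabla(\psi(i_{j'}))$ you must expose \emph{every} pair $uw$ with $u\in N(\psi(i_{j'}))$ and $w\notin\tilde N(\psi(i_{j'}))$, whether or not it is an edge. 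Consequently every coordinate $v\in N(\phi(i_j))$ loses $\Theta(knp)$ of its Bernoulli trials to the filtration, and the expected number of coordinates that acquire a nonzero known offset (an already-exposed edge from $v$ into some $N(\psi(i_{j'}))$) is $\Theta(k n^2 p^3)$, which is polynomially large for $n^{-2/3}\ll p\le n^{-1/2-\epsilon}$ and in particular is not $o(\sqrt{np})$ throughout the range. So ``all but $O(1)$ of the $d$ coordinates remain fresh i.i.d.'' is false: the surviving coordinates are independent but \emph{not} identically distributed (shifted binomials with varying trial counts), Lemma~\ref{Lem:MultinomialWithBinomialProbsBound} does not apply to them as stated, and discarding the offset coordinates costs a union bound of $(np)^{O(kn^2p^3)}$, which swamps the target. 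A smaller instance of the same problem appears already at $j=1$: conditioning on $C_1$ forces cycle edges such as $\phi(i_j+1)\phi(i_j+2)$, which lie inside your $E_{\phi(i_j)}$.

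The paper avoids the sequential filtration altogether. It sets $T=\mathrm{im}(\phi)\cup\mathrm{im}(\psi)$ and defines $U_i$ to be the neighbors of $\phi(i)$ having \emph{no other neighbor in} $\tilde N(T)$; it first exposes $G[\tilde N(T)]$ and then all edges from $\tilde N(T)\setminus\bigcup_i U_i$ to $V\setminus\tilde N(T)$, which determines every $\nabla(\psi(j))$ and all contaminated parts of the $\nabla(\phi(i))$ in one shot. The residual randomness for index $i$ is then exactly the degrees $d(x,V\setminus\tilde N(T))$, $x\in U_i$, which are genuinely i.i.d.\ $\mathrm{Binom}(n-|\tilde N(T)|,p)$ and independent across $i$ because the $U_i$ and their outgoing pair sets are disjoint; a separate proposition shows $\frac{np}{4}\le|U_i|\le 4np$ for at least $\frac{7k}{8}$ indices with probability $1-e^{-\Omega(npt)}$. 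If you want to keep your sequential scheme, you would need either an anti-concentration lemma for independent non-identically distributed (shifted, unequal-length) binomials in place of Lemma~\ref{Lem:MultinomialWithBinomialProbsBound}, or to adopt the paper's device of restricting to private neighbors before exposing anything else.
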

\begin{proof}
We only prove the type I case. The same argument applies to type II configurations as well. The argument below and all relevant calculations take place in the space {\em conditioned on} $C_1$.

Let $T=\text{Image}(\phi)\cup\text{Image}(\psi)$ and $t=|T|$. For each index $i$ let $U_i$ be the set of those neighbors of $\phi(i)$ that have no other neighbor in $\tilde N(T)$. We expose the subgraph induced on $\tilde N(T)$, thus revealing the sets $U_i$. The following proposition comes in handy:
\begin{proposition}\label{prop:CompatibleBoundProp}
With probability $1-e^{-\Omega(npt)}$ there holds:
\begin{itemize}
\item
$|\tilde{N}(T)| < 2npt$
\item
There are at least $\frac{7k}{8}$ indices $k\ge i\ge 1$ for which $\frac{np}{4}\le |U_i|\le 4np$.
\end{itemize}
\end{proposition}
We proceed under the conditioning that the conclusion of this Proposition holds. We next reveal the edges connecting $\tilde N(T)\setminus \bigcup_i U_i$ and $V\setminus\tilde N(T)$. This determines $\nabla(\psi(j))$ for all $j$. On the other hand, $\nabla(\phi(i))$ is completely determined by the neighbor sets of vertices from $U_i$ in $V\setminus\tilde N(T)$. Consequently the family of multisets $\{\nabla(\phi(i))\}_i$ is independent.

We are concerned with the event that $\nabla(\phi(i))=\nabla(\psi(i))$. At this stage this may already be impossible, and if possible, this uniquely determines the multiset of degrees $d(x, V\setminus\tilde N(T))$ over $x\in U_i$. The elements of this multiset are drawn from a binomial distribution, so by Lemma \ref{Lem:MultinomialWithBinomialProbsBound}, if $\frac{np}{4}\le |U_i|\le 4np$, then
$$\Pr(\nabla(\phi(i))=\nabla(\psi(i)))\le (np)^{-\Omega(\sqrt{np})}.$$

Note that for $s>\frac 14k$ to hold, the equality $\nabla(\phi(i))=\nabla(\psi(i))$ must hold for at least $\frac k8$ of the indices $i$ for which $|U_i|\ge \frac{np}{4}$. Hence,
$$\Pr(s>\frac 14k)\leq \binom {\frac 78k}{\frac 18k} (np)^{-\Omega(k\sqrt{np})} \leq (np)^{-\Omega(k\sqrt{np})},$$
as stated.

{\em Proof of Proposition~\ref{prop:CompatibleBoundProp}:~}
The first claim follows from Chernoff's bound, as we observe that
$$|\tilde N(T)| \sim t+\text{Bin}(n-t,1-q^t),\mbox{ where }q=1-p.$$
so that
$$|\mathbb E(|\tilde N(T)|)| \leq npt(1+o(1)).$$
For the second claim
$$|\sigma(T)| \sim \text{Bin}(n-t,tpq^{t-1})$$
and so
$$\mathbb E(|\sigma(T)|) \geq npt(1-o(1)).$$
Let $A$ denote the event that $|\tilde N(T)|\leq 2npt$ and $|\sigma(T)| \ge \frac{npt}{2}$. By Chernoff's bound,
$$\Pr (A) \ge  1-e^{-\Omega(npt)}.$$
Now,
$$d(\sigma(T), N(T)) \sim \text{Bin}\left(|\sigma(T)|\cdot \left(|\tilde N(T)|-t-\frac{|\sigma(T)|+1}{2}\right), p\right)$$

Let $B$ denote the event that $A$ holds, and, in addition, $d(\sigma(T), N(T))<\frac{npt}{8}$. Note that $A$ implies 
$$|\sigma(T)|\cdot \left(|\tilde N(T)|-t-\frac{|\sigma(T)|+1}{2}\right) \leq |\tilde N(T)|^2\leq (2npt)^2.$$
Hence,
$$\Pr(B|A) \geq  1-\binom{(2npt)^2}{\frac {npt}{8}} p^{\frac {npt}{8}}\geq 1- e^{-\Omega(npt)}$$
and so
$$\Pr(B) = \Pr(B|A)\cdot \Pr(A) \geq 1-e^{-\Omega(npt)}.$$
Let $U=\bigcup_i U_i$. Note that $B$ implies that at least $\frac{npt}{4}$ vertices in $\sigma(T)$ have no neighbor in $N(T)$, and thus, $|U|\geq \frac{npt}{4}$.
Clearly, $|U_i| \sim \text{Bin}(|U|, \frac{1}{t})$. Let $D_i$ denote the event that $|U_i| < \frac{np}{4}$. For $x\geq \frac{npt}{4}$, Chernoff's bound implies
$$\Pr(D_i\mid|U|=x)\leq e^{-\Omega(np)}.$$
Note that given $|U|=x$, the event $D_i$ is negatively correlated with every event of the form $\bigcap_{j\in J} D_j$ where $\emptyset\neq J\subseteq [k]\setminus{i}$. Thus, for every $I\subseteq [k]$,
$$\Pr\left(\bigcap_{i\in I} D_i\mid|U|=x\right)\leq \prod_{i\in I} \Pr(D_i\mid|U|=x) \leq e^{-\Omega(np|I|)}.$$
In particular, the event $\tilde D$ that at most $\frac{k}{16}$ of the $D_i$ hold satisfies
$$\Pr(\tilde D\mid|U|=x) \geq 1 - \binom{k}{k/16}\cdot e^{-\Omega(npt)} \geq 1- e^{-\Omega(npt)}$$
which implies
$$\Pr(B\cap \tilde D) \geq 1 - e^{-\Omega(npt)}.$$
Let $F$ be the event that at most $\frac{k}{16}$ of the $U_i$'s satisfy $|U_i|<4npt$. 
A similar argument shows that given
$$\Pr(B\cap \tilde F) \geq 1-e^{-\Omega(npt)}$$
and we conclude that
$$\Pr(B\cap \tilde D\cap \tilde F) \geq 1-e^{-\Omega(npt)}.$$
\end{proof}

At this stage we have already established the following whp: For $\omega(\frac 1n)\le p(n)\le O(n^{-0.5-\epsilon})$ every automorphism of a $G(n,p)$ graph pointwise fixes its $2$-core. However, we seek to prove the stronger statement that the $2$-core has no nontrivial symmetries. As before consider two random maps $\phi,\psi:[k]\to V$ where $3\le k\leq \log n $. Let $T=\text{Image}(\phi)\cup\text{Image}(\psi)$ and define the events $C_1, C_2$ as above. Clearly $T\subseteq R$, since $T$ is a union of cycles, and  now we need to control the effect of non-$2$-core vertices on $\text{aut}(G)$. This effect is mediated by the set $P\subseteq T$ of $T$'s {\em peripheral} vertices, namely those within distance $2$ of $\tilde R = V\setminus R$. As we show, the above-mentioned effect is not large, since $|P|$ tends to be small. We prove

\begin{lemma}
	\label{lem:PeripherialBound}
	\[
	\Pr\left(|P| > \frac k8\mid C_1\right)\leq (np)^{-\omega_n(k)}
	\]
	\[
	\Pr\left(|P| > \frac k8\mid C_2\right)\leq (np)^{-\omega_n(k)}
	\]
\end{lemma}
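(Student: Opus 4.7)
We handle the type~I case; type~II is analogous. Condition throughout on $C_1$, so that $T=\text{Image}(\phi)\cup\text{Image}(\psi)\subseteq R$ consists of at most $2k$ vertices forming two simple cycles. The key structural observation is that since every vertex of $\tilde R$ has at most one $R$-neighbor and $T\subseteq R$, a vertex $v\in T$ lies in $P$ if and only if some $w\in\tilde N(v)\cap R$ carries a \emph{pendant tree}---a maximal subtree of $G$ contained in $\tilde R$ whose sole attachment to $R$ is $w$. (If $v$ has a neighbor in $\tilde R$, take $w=v$; otherwise a distance-$2$ witness $v$--$w$--$u$ forces $w\in R$ and $u$ into a pendant tree rooted at $w$.) Consequently, $|P|>k/8$ requires at least $k/8$ vertices of $T$ each to admit a pendant-tree witness in their closed neighborhood.

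The approach combines a single-vertex bound with approximate independence. For the single-vertex bound I aim to show that for each $v\in T$,
\[
\Pr(v\in P\mid C_1)\le O\bigl((np)^2 e^{-np}\bigr)=(np)^{-\omega_n(1)},
\]
where the last equality uses $np=\omega_n(1)$. This follows from an expected-count argument: for a fixed $w$, a pendant tree of size $s$ rooted at $w$ is specified by an $s$-subset of $V\setminus\{w\}$, a labeled rooted tree structure (at most $s^{s-1}$ choices), $s$ required edges, and the absence of any edges from those $s$ vertices to $R\setminus\{w\}$. Crude upper bounds yield an expected count of at most $(O(np)\, e^{-np})^s$ per size, and summing the geometric series gives $\Pr(w\text{ has a pendant tree})\le O(np\cdot e^{-np})$. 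A union bound over $w\in\tilde N(v)$ (of expected size $\le 1+np$) then delivers the claim.

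Next I upgrade to a joint bound: for every $S\subseteq T$ with $|S|=\lceil k/8\rceil$,
\[
\Pr\bigl(\forall v\in S:\, v\in P\mid C_1\bigr)\le \bigl(O((np)^2 e^{-np})\bigr)^{|S|}.
\]
The essential point is that pendant trees at distinct roots are \emph{vertex-disjoint} (every $\tilde R$-vertex lies in a unique pendant tree), so their witnesses consume disjoint random edges. This supports an expose-and-condition iteration: expose a minimal pendant-tree witness for $v_1$ (involving $O(1)$ vertices of $V\setminus T$); the residual graph on the unexposed vertices is still $G(n',p)$-distributed, so the single-vertex bound applies to $v_2$ within it; iterate. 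Each step loses at most a $1+o(1)$ factor. A union bound over $S$ then yields
\[
\Pr(|P|>k/8\mid C_1)\le \binom{2k}{\lceil k/8\rceil}(np)^{-\omega_n(1)\cdot|S|}=(np)^{-\omega_n(k)},
\]
the combinatorial factor $\binom{2k}{\lceil k/8\rceil}=e^{O(k)}=(np)^{O(k/\log(np))}$ being absorbed since $\log(np)\to\infty$.

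The main obstacle is to make the joint bound rigorous. Two $v_i,v_j\in S$ can share a common root $w$---either because $w\in N(v_i)\cap N(v_j)$ is a common neighbor (an event of probability $O(np^2)=o(1)$ in our range $p\le n^{-1/2-\epsilon}$) or because $v_i,v_j$ are adjacent in the cycles of $T$ or via a random $G$-edge, letting $w=v_i$ or $w=v_j$ serve both. In such cases the witnesses coincide and the naive exposure argument overcounts. This is handled by first partitioning $T$ into $O(1)$ independent sets, so that adjacency inside each part is excluded; within a part, the remaining correlations contribute at most a term of order $p\cdot q$ (with $q:=O((np)^2 e^{-np})$) to the pair probability, which in our effective range $np\le\log n+O(\log\log n)$ (the only regime in which $\tilde R\ne\emptyset$ whp) satisfies $pq=o(q^2)$, preserving the product bound up to a negligible factor.
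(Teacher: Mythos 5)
Your route is genuinely different from the paper's, and the structural observation at its heart (that $v\in P$ exactly when some $w\in\tilde N(v)$ is the attachment point of a pendant tree of $G_{\tilde R}$, and that distinct pendant trees are vertex-disjoint) is correct and could in principle be made to work. But the step that carries all the weight --- upgrading the single-vertex bound to a product bound over $k/8$ vertices --- has real gaps as written. First, the ``expose a minimal witness; the residual graph is still $G(n',p)$'' iteration is not a proof: the event ``some $w\in\tilde N(v_1)$ roots a pendant tree'' is a union over witnesses, each of which involves the \emph{absence} of edges from the tree to essentially all of $V$, and the union ranges over a root set $\tilde N(v_1)$ that is itself random; conditioning on such an event does not leave a product measure, and ``each step loses at most a $1+o(1)$ factor'' is asserted rather than shown. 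The honest formalization is a first-moment count over tuples of vertex-disjoint pendant trees with specified roots and specified root-to-$v_i$ edges, which is exactly the part you have not written. Second, your patch for shared roots relies on $pq=o(q^2)$, i.e.\ $p=o((np)^2e^{-np})$; this is false for, say, $np=\log n+3\log\log n$, where $q\asymp 1/(n\log n)$ while $p\asymp(\log n)/n$. (The conclusion you need survives there because $p$ itself is $(np)^{-\omega_n(1)}$, but the stated justification is wrong.) Third, restricting to $np\le\log n+O(\log\log n)$ ``because otherwise $\tilde R=\emptyset$ whp'' is not legitimate: the lemma requires failure probability $(np)^{-\omega_n(k)}$ with $k$ up to $\log n$, and for $np=\log n+C\log\log n$ the probability that $\tilde R\neq\emptyset$ is only polylogarithmically small, far too large to discard.

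For comparison, the paper sidesteps all of these dependence issues with a two-round exposure. It first reveals $H=G_{V\setminus T}$ and applies Lemma~\ref{lem:Humongous2Core} to $H$, obtaining $|W|\le x:=n/(np)^{\log(np)}$ except with probability $(np)^{-\omega(k)}$, where $W=V(H)\setminus R(H)\supseteq\tilde R$, together with $|N_H(W)|\le|W|$. Only then does it reveal the cross edges $E(T,V\setminus T)$, which are independent of $H$; each vertex of $T$ therefore sends an edge into $W$ (or into $N_H(W)$) with probability at most $xp=(np)^{1-\log(np)}$, independently across the relevant pairs, so a plain binomial tail controls $P_1$ and $P_3$, and $P_2$ is handled separately by bounding $d(P_1,T)$. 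If you want to keep your local pendant-tree argument, replace the exposure iteration by an explicit union bound over tuples of disjoint pendant trees and drop the two incorrect range/correlation claims; otherwise the global-then-local exposure of the paper is the cleaner path.
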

\begin{proof}
We only prove case I. The same argument applies as well to case II. All our arguments below are made {\em conditioned on} $C_1$.
Let $q=1-p$ and $t=|T|$. Clearly, $k\leq t\leq 2k$.

Reveal the subgraph $H$ of $G$, induced by $V\setminus T$. Denote $W = V(H)\setminus R(H)$. Let $x=\frac{n}{(np)^{\log(np)}}$. By Lemma ~\ref{lem:Humongous2Core},
$$\Pr(|W|\leq x) \ge 1-e^{-\Omega(npx)}\geq 1-(np)^{-\omega(k)}.$$ We henceforth condition on this event. Note that $\tilde R\subseteq W$, and thus, it is enough to bound the number of vertices in $T$ at distance $\le 2$ from $W$. We denote $N_H(W)$ by $Q$. We claim that $|Q|\leq |W|$, since every vertex in $Q$ has a neighbor in $W$, whereas every vertex in $W$ has at most one neighbor in $Q$. (Note that $Q\subseteq R(H)$ and a vertex with more than one neighbor in $Q$ is in $R(H)$ as well). 

To understand the set $P$ of peripheral vertices, we define three sets $P_1, P_2, P_3$ with $P\subseteq P_1\cup P_2\cup P_3$ and show that whp all $|P_i|$ are small.
Let $P_1$ be the set of those vertices in $T$ with a neighbor in $W$. Let $P_2$ be the set of those vertices in $T$ with a neighbor in $P_1$. Finally, $P_3$ is the set of those vertices in $T$ with a neighbor in $Q$.

Now reveal the set of cross edges $E(T,V\setminus T)$. For $v\in T$, the probability that $v$ has a neighbor in $W$ is at most $xp$. Thus, 
$$\Pr\left(|P_1|\geq \frac{k}{400}\right)\leq \binom{t}{\frac{k}{400}}(xp)^{\frac{k}{400}}\leq (np)^{-\Omega(k\log(np))}$$
and similarly, $\Pr\left(|P_3|\geq \frac{k}{400}\right)\leq(np)^{-\Omega(k\log(np))}$. In what follows we condition on the event that $|P_1|,|P_3|\leq \frac{k}{400}$.

We finish by bounding $|P_2|$. Reveal the edge set $E(P_1,T)$. By assumption, $T$ is the image of a type I configuration, namely two simple cycles, possibly with some overlaps. This implies the existence of certain edges in $E(P_1,T)$, at most $4|P_1|$ in number. In addition, the random variable $d(P_1,T)$ is a sum of at most $|P_1||T|$ independent Bernoulli-$p$ random variables. By assumption $4|P_1|\leq \frac{k}{100}$, so that $d(P_1,T) > \frac{k}{50}$ only if at least $\frac{k}{100}$ of these Bernoulli trials succeed. Therefore
$$\Pr\left(d(P_1,T) > \frac{k}{50}\right) \leq \binom {\frac{kt}{400}}{\frac k{100}} \cdot p^{\frac k {100}}\leq  (kp)^{\Omega(k)}\leq (np)^{-\omega_n(k)}$$
Clearly, $|P_2| \leq d(P_1,T)$, and so, $|P_2|\leq \frac k {50}$ with probability at least $1-(np)^{-\omega_n(k)}$.
\end{proof}

\begin{definition}
	A configuration $(\phi,\psi)$ of $G$ is said to be \emph{compatible} if there
	exists an automorphism $\pi$ of $R(G)$ such that $\pi(\phi(i))=\psi(i)$
	for each $i$.
\end{definition}

\begin{lemma}
\label{lem:NoBadConfigs}Let $G$ be a random $G(n,p)$ graph with $\omega(\frac 1n)\le p\le O(\frac {\log n} n)$. Then whp $G$ contains no compatible configuration of type I or II.
\end{lemma}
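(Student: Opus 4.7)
My plan is a first-moment argument: for each configuration type and each $3\le k\le \log n$, I would bound the expected number of compatible configurations of that type and order, then sum. Fixing $k$ and two maps $\phi,\psi\colon[k]\to V$, I would bound $\Pr((\phi,\psi)\text{ is compatible})$ and multiply by the $n^{2k}$ ordered pairs of maps via the union bound.

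The key claim will be: if $(\phi,\psi)$ is a compatible configuration, then $\nabla(\phi(i))=\nabla(\psi(i))$ for every index $i$ with $\phi(i),\psi(i)\notin P$. To verify this, I would observe that if $v\in R(G)$ is at distance at least $3$ from $\tilde R$, then $N(v)\subseteq R(G)$ and every $x\in N(v)$ also satisfies $N(x)\subseteq R(G)$, so $\nabla(v)$ depends only on the induced subgraph $R(G)$. Any automorphism $\pi$ of $R(G)$ therefore preserves $\nabla$ on such vertices, and when both $\phi(i),\psi(i)\notin P$ and $\pi(\phi(i))=\psi(i)$, we obtain $\nabla(\phi(i))=\nabla(\psi(i))$.

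With this claim in hand, I would condition on $C_1$ (respectively $C_2$) and combine Lemmas~\ref{lem:PeripherialBound} and \ref{lem:CompatibleBound}. Since $\phi$ and $\psi$ are each injective in both configuration types, at most $2|P|$ indices are peripheral. By Lemma~\ref{lem:PeripherialBound}, the event $|P|\le k/8$ fails with probability only $(np)^{-\omega(k)}$, so otherwise at least $3k/4$ indices are non-peripheral. The key claim then forces the quantity $s$ of Lemma~\ref{lem:CompatibleBound} to be at least $3k/4$ whenever $(\phi,\psi)$ is compatible, comfortably exceeding the thresholds $k/4$ (type I) and $(k-2)/4$ (type II). Lemma~\ref{lem:CompatibleBound} accordingly gives $\Pr\bigl((\phi,\psi)\text{ compatible}\mid C_j\bigr)\le (np)^{-\Omega(\sqrt{np}\cdot k)}$ for $j=1,2$.

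To finish I would apply Lemma~\ref{lem:ConfigurationBound}. Since $np=\omega(1)$, eventually $2/n+p\le 2p$, so $\Pr(C_1)\le (2p^2)^k$ (with an analogous bound, carrying an extra $n^{-2}$, for $C_2$). The expected number of compatible type I configurations of order $k$ is therefore at most $n^{2k}(2p^2)^k(np)^{-\Omega(\sqrt{np}\cdot k)} = \bigl(2\cdot(np)^{2-\Omega(\sqrt{np})}\bigr)^k$, which is summable to $o(1)$ over $k\in\{3,\ldots,\log n\}$ once $np$ is sufficiently large. The type II case is identical up to the $n^{-2}$ gain. The main obstacle, I think, is really the $\nabla$-preservation claim: $\nabla$ is defined in the full graph $G$, whereas the compatibility witness $\pi$ only acts on $R(G)$, so some care is required to link the two through the peripheral vertex set $P$.
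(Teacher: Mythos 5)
Your proposal is correct and follows essentially the same route as the paper: the paper also argues that whp at least $3k/4$ indices are $\nabla$-incompatible in $G$ (Lemma~\ref{lem:CompatibleBound}) while at least $3k/4$ indices have both endpoints non-peripheral so that $\nabla$ computed in $G$ agrees with $\nabla$ computed in $R(G)$ (Lemma~\ref{lem:PeripherialBound}), forces these two sets to intersect, and finishes with the same first-moment sum over $k\le\log n$ using Lemma~\ref{lem:ConfigurationBound}. Your ``key claim'' is exactly the paper's observation that $Y=Z$ on non-peripheral vertices and that automorphisms of $R(G)$ preserve $Z$.
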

\begin{proof}
We prove the claim for type I configurations. The proof for type II follows the same argument.

In the coming paragraph we denote $\nabla(v)$ by $Y(v)$. We also consider the 2-neighborhood of $v\in R(G)$ in the subgraph induced by $R(G)$ and denote $\nabla(v)$ in that graph by $Z(v)$. Clearly, a configuration $\phi,\psi:[k]\to V$ can be compatible only if $Z(\phi(i))=Z(\psi(i))$ for each $1\leq i\leq k$. 

Let $3\leq k\leq \log n$ and pick two functions $\phi,\psi:[k]\to V$ uniformly at random. By Lemma \ref{lem:ConfigurationBound}, the probability that $(\phi,\psi)$ is a configuration is at most $p^{k+o_n(1)}$. Conditioned on this event, let $$A = \{i\in[k]\mid Y(\phi(i))\neq Y(\psi(i))\}.$$ By Lemma \ref{lem:CompatibleBound}, $\Pr(|A|< \frac 34k) \le (np)^{-\omega_n(k)}$. Let $$B=\{i\in[k]\mid Y(\phi(i))=Z(\phi(i))\text{~and~} Y(\psi(i))=Z(\psi(i))\}.$$ Note that $i\in B$ when both $\phi(i)$ and $\psi(i)$ are non-peripherial. Hence, by Lemma~\ref{lem:PeripherialBound}, $\Pr(|B|< \frac 34k) \le (np)^{-\omega_n(k)}$. But $|A|,|B|\geq \frac 34 k$, so they must intersect, say $i\in A\cap B$. Then $Z(\phi(i))\neq Z(\psi(i))$, which makes $(\phi,\psi)$ incompatible. Clearly this holds with probability $1-(np)^{-\omega_n(k)}$.

If $a_k$ is the number of compatible type I configurations we can now estimate its expectation:
$$ \mathbb E(a_k) = n^{2k} \cdot p^{2k+o_n(1)}\cdot (np)^{-\omega_n(k)}\leq (np)^{-\omega_n(k)}$$
and so
$$
\sum_{k=3}^{\log n} \mathbb E(a_k) \leq (np)^{-\omega_n(1)}
$$
which completes the proof.
\end{proof}

We can now finish up the proof of our main theorem.

\mainTheorem*
\begin{proof}
Let $H$ denote the $2$-core of $G$.
It is known (\cite{CL01}) that whp $\text{diam}(G)<\frac{\log n}{2}$, which we henceforth assume.

Suppose that $\pi(v)\neq v$ for some $\pi\in\text{aut}(H)$, and a vertex $v\in R(G)$. By Lemma \ref{lem:NoBadConfigs}, it is enough to show that this assumption implies that $G$ has a compatible configuration. It is easy to see that if $\pi$ fixes all vertices of $H$ contained in cycles, then $\pi$ is trivial, so let $C$ be a cycle that contains $v$. The bound on $G$'s diameter implies that such a $C$ exists of length at most $\log n$. 

The argument splits now according to whether $\pi$ fixes some vertex in $C$. If it does not, then $\phi$ and $\psi$ that map $[k]$ to $C$ and to $\pi(C)$ respectively form a compatible type I configuration, and we are done. Otherwise, consider an arc $\Gamma=u\rightsquigarrow u'$ (possibly $u'=u$) of $C$ so that: $v\in\Gamma$, and the only $\pi$-fixed points in $\Gamma$ are $u, u'$. We obtain a compatible type II configuration by letting $\phi$ map $[k]$ to $\Gamma$ and $\psi$ map $[k]$ to $\pi(\Gamma)$.

\end{proof}

\section{Connections with the Reconstruction Problem}

The purpose of this section is to prove:
\begin{theorem}\label{thm:reconstruct}
For every $0\leq p\leq 1$ whp a $G(n,p)$ graph is reconstructible.
\end{theorem}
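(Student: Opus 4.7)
The plan is to split the range of $p$ into regimes and combine the Main Theorem with classical reconstruction tools in each. By complementation, since $G$ and $\bar G$ have the same automorphism group and their decks determine each other, we may assume $p\le\tfrac12$. For $p\ge (5/2+\epsilon)\log n/n$ the claim is exactly Bollob\'as's theorem~\cite{Bol90}, so it remains to treat $p\le (5/2+\epsilon)\log n/n$; for large $n$ this range sits inside $p\le n^{-1/2-\epsilon'}$, which means Theorem~\ref{thm:mainTheorem} is available whenever $p\ge\omega(1/n)$.

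For the middle regime $\omega(1/n)\le p\le (5/2+\epsilon)\log n/n$, I would invoke Theorem~\ref{thm:mainTheorem} to obtain that the $2$-core $R(G)$ is rigid whp. As observed in the introduction, $\text{aut}(G)$ is then generated by automorphisms of rooted pendant trees attached to $R(G)$ and by automorphisms and swaps of the tree components of $G$. Reconstruction from the deck proceeds in two stages. First, using Kelly's Lemma (which recovers the count of every proper subgraph of $G$), one separates the cards into those arising from vertex deletions in a tree component of $G$ and those arising from vertex deletions in the core component containing $R(G)$. The multiset of tree components is then reconstructible by Kelly's theorem on trees~\cite{Kel57} combined with the classical fact that isomorphism types of components can be aggregated from the deck. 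Second, one recovers the core component together with its pendant trees; here the rigidity of $R(G)$ is essential, because it forbids any automorphism of the core component from permuting pendant trees hanging off distinct vertices of $R(G)$. Each pendant tree, rooted at its attachment point, can thus be read off as a rooted tree from the cards in which one of its vertices is deleted, and Kelly's Lemma resolves any remaining ambiguity about which deletion corresponds to which location.

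For the sparse tail $p=O(1/n)$ the Main Theorem is not directly available, but $G$ has much simpler structure. Whp every component has size $O(\log n)$, and apart from at most one giant component in the supercritical regime every component is either a tree or a unicyclic graph; both classes are long known to be reconstructible. A single giant component, when present, can be handled by running the middle-regime argument on that component alone: its $2$-core is either empty or consists of a bounded number of short cycles, whose rigidity is verified directly rather than via Theorem~\ref{thm:mainTheorem}.

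The main obstacle is the deck-level bookkeeping in the middle regime, namely converting the structural fact that $R(G)$ is rigid (with pendant trees and free tree components making up the rest of $G$) into a genuine reconstruction procedure based only on the cards and Kelly-Lemma counts. The argument itself is conceptually routine once the rigidity of $R(G)$ is in hand, but it requires a careful case analysis depending on whether the deleted vertex lies in $R(G)$, in a pendant tree rooted on $R(G)$, or in a free tree component of $G$.
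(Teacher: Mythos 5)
There is a genuine gap at the heart of your middle regime: you treat the passage from ``$R(G)$ is rigid'' to ``$G$ is reconstructible'' as routine bookkeeping with Kelly's Lemma, but this implication is exactly where the real work lies, and rigidity of $G$ alone does not supply it. Reconstruction requires you to rule out unwanted isomorphisms between vertex-deleted (in fact, \emph{doubly} vertex-deleted) subgraphs $G_{V\setminus\{u,v\}}$ and $G_{V\setminus\{x,y\}}$ for distinct pairs, and to show these subgraphs have no nontrivial automorphisms moving the core --- statements about graphs that are \emph{not} $G$, to which Theorem~\ref{thm:mainTheorem} does not apply. The difficulty is that the whole rigidity proof rests on the local statistics $\nabla(v)$ separating vertices, and deleting two vertices perturbs these multisets. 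The paper therefore has to strengthen Lemma~\ref{lem:CompatibleBound} from exact equality $\nabla(\phi(i))=\nabla(\psi(i))$ to an approximate equality $\nabla(\phi(i))\approx\nabla(\psi(i))$ (allowing a few decrements and deletions), and this strengthening in turn needs the whp $K_{3,2}$-freeness of $G$ in the relevant range to bound how much a deleted vertex can disturb any $\nabla(u)$. That yields Lemma~\ref{lem:NoAcceptableConfigs} and then Lemma~\ref{lem:NoPartialsIsomorphisms} (non-isomorphism and core-rigidity of the doubly-deleted subgraphs for ``interior'' pairs), after which reconstruction follows by the classical device of locating $v$ inside the card $G-u$ and $u$ inside $G-v$ and gluing, with $|E|$ deciding whether $uv\in E$. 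None of this appears in your proposal, and no citation you give supplies it; ``Kelly's Lemma resolves any remaining ambiguity'' is not a proof that a card determines where its missing vertex sat.

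Two smaller points. First, your middle regime is much larger than necessary: for $p\le\frac{(1-\epsilon)\log n}{n}$ the graph is disconnected whp, and disconnected graphs are reconstructible, so the only range actually requiring new work is $\frac{(1-\epsilon)\log n}{n}\le p\le\frac{(5/2+\epsilon)\log n}{n}$; this also disposes of your entire ``sparse tail'' case and the component-by-component analysis there. Second, your pendant-tree aggregation scheme is not needed once one works, as the paper does, with the doubly-deleted subgraphs directly; the decomposition into tree components and pendant trees is the right picture of $\mathrm{aut}(G)$ but is not the mechanism by which the deck determines $G$.
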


We may clearly restrict ourselves to the range $0\leq p\leq \frac 12$, since a graph is reconstructible iff its complement is reconstructible. We may further restrict our attention to the range $\frac {(1-\epsilon)\log n}{n}\leq p\leq \frac {(5/2+\epsilon)\log n}{n}$ since the theorem is known for the two complementary ranges. For $p\geq \frac {(5/2+\epsilon)\log n}{n}$ this was done by  Bollob\'as \cite{Bol90}. Also, disconnected graphs are reconstructible \cite{Bon91}, which takes care of the range $p\leq \frac {(1-\epsilon)\log n}{n}$. One further simplification is that for $p$ in the above range, $G$ almost surely has no $K_{3,2}$ subgraph. So we can and will be assuming this below. Our line of argument resembles the first part of the proof of  Theorem~\ref{thm:mainTheorem}. However, we need to adapt Lemma~\ref{lem:CompatibleBound}, a key step in that proof. This lemma gives an upper bound on $\Pr(\nabla(\phi(i))=\nabla(\psi(i)))$, while here this equality gets replaced by an {\em approximate} equality as we now define.

For two multisets of integers we say that $A\approx B$ if they can be made equal by applying some of the following operations to each of them. (Here $X$ refers to either $A$ or $B$).
\begin{itemize}
	\item Decrease some elements of $X$ by $1$ or $2$. The total subtracted sum must be $\le 4$.
	\item Delete one or two elements of $X$.
\end{itemize}

\begin{definition}
A configuration $(\phi,\psi)$ is {\em acceptable} if there exist vertex sets $U,W\subseteq V$ of size $n-2$ such that $\text{im}(\phi)\subseteq U$, $\text{im}(\psi)\subseteq W$, and $G_U$ and $G_W$ are isomorphic through a graph isomorphism $\pi$ that maps $\phi(i)$ to $\psi(i)$ for every $i$. 
\end{definition}

\begin{lemma} \label{lem:NoAcceptableConfigs}
Whp, $G$ contains no acceptable configurations of type I or II. 
\end{lemma}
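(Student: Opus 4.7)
My plan is to mirror the proof of Lemma~\ref{lem:NoBadConfigs} almost verbatim, replacing exact equality of $\nabla$-multisets by the approximate equality $\approx$. For each $k$ with $3\le k\le \log n$ I pick random $\phi,\psi\colon[k]\to V$, condition on $C_1$ or $C_2$ from Lemma~\ref{lem:ConfigurationBound}, and bound the probability that $(\phi,\psi)$ is acceptable. A first-moment summation over $k$ then finishes the argument. The peripheral analysis of Lemma~\ref{lem:PeripherialBound} is not needed here, because acceptability is a property of $G$ rather than of $R(G)$.

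\textbf{From acceptability to approximate $\nabla$-equality.} Suppose $(\phi,\psi)$ is acceptable, witnessed by sets $U=V\setminus\{a_1,a_2\}$, $W=V\setminus\{b_1,b_2\}$ and an isomorphism $\pi\colon G_U\to G_W$ with $\pi(\phi(i))=\psi(i)$. I would show that $\nabla(\phi(i))\approx\nabla(\psi(i))$ for every $i$, using as a common reduction the multiset $\nabla^{G_U}(\phi(i))$, which coincides with $\nabla^{G_W}(\psi(i))$ since $\pi$ preserves all the relevant combinatorial data. Passing from $\nabla(\phi(i))$ (computed in $G$) to $\nabla^{G_U}(\phi(i))$ (computed in $G_U$) has two effects: it removes the at-most-two entries corresponding to those $a_j$ that lie in $N(\phi(i))$, and for every $a_j\notin\tilde N(\phi(i))$ it decreases by one the entry of each $v\in N(\phi(i))\cap N(a_j)$. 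The assumption that $G$ contains no $K_{3,2}$ bounds the number of common neighbors of $\phi(i)$ and any such $a_j$ by two, so each $a_j$ contributes at most $2$ to the total decrease and each individual entry drops by at most $2$. These are precisely the operations allowed in the definition of $\approx$.

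\textbf{Adapting Lemma~\ref{lem:CompatibleBound} and the main obstacle.} The bulk of the work is to re-prove Lemma~\ref{lem:CompatibleBound} with $\nabla(\phi(i))\approx\nabla(\psi(i))$ in place of equality. I would run the same exposure: reveal $G$ on $\tilde N(T)$, invoke Proposition~\ref{prop:CompatibleBoundProp} to secure $|U_i|=\Theta(np)$ for most $i$, and then reveal the edges from $\tilde N(T)\setminus\bigcup_i U_i$ outward, thereby fixing $\nabla(\psi(i))$. Conditionally on this, $\nabla(\phi(i))$ is governed by the multinomial law of Lemma~\ref{Lem:MultinomialWithBinomialProbsBound}, which gives $\Pr(\nabla(\phi(i))=A)\le (np)^{-\Omega(\sqrt{np})}$ for any fixed $A$. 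It therefore remains to bound the number of multisets $A$ with $A\approx\nabla(\psi(i))$. After conditioning on the whp event that every degree in $G$ lies in $[0,Cnp]$ for a suitable constant $C$, a direct count shows this number is at most $(np)^{O(1)}$: there are at most $|B|^{O(1)}$ common reductions $D$ of $B:=\nabla(\psi(i))$, and given such a $D$ the multiset $A$ is obtained from $D$ by distributing at most $4$ units of increase over its entries (another $|D|^{O(1)}$ factor) and inserting at most $2$ extra entries with values in $[0,Cnp]$. This yields $\Pr(\nabla(\phi(i))\approx\nabla(\psi(i)))\le (np)^{-\Omega(\sqrt{np})}$, and since acceptability forces this relation to hold for every $i$, the first-moment calculation of Lemma~\ref{lem:NoBadConfigs} goes through unchanged. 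The principal technical obstacle is the $\approx$-class count: the $K_{3,2}$-freeness of $G$ is what makes the acceptability-to-$\approx$ implication work, while the maximum-degree conditioning is what keeps the count polynomial in $np$ and preserves the $(np)^{-\Omega(\sqrt{np})}$ bound that drives the first-moment estimate.
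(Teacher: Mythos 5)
Your proposal is correct and follows essentially the same route as the paper: derive $\nabla(\phi(i))\approx\nabla(\psi(i))$ from acceptability via $K_{3,2}$-freeness, strengthen Lemma~\ref{lem:CompatibleBound} by paying a $\mathrm{poly}(np)$ union-bound factor for the $\approx$-class (which the $(np)^{-\Omega(\sqrt{np})}$ bound absorbs), and finish with the first-moment computation; your observation that Lemma~\ref{lem:PeripherialBound} is not needed also matches the paper. Your explicit count of the $\approx$-equivalence class is somewhat more detailed than the paper's, but the argument is the same.
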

\begin{proof}
We first claim that $\nabla_G(u)\approx \nabla_G(\pi(u))$, for every $u\in U$ for $U$, $W$ and $\pi$ as above. This is so, since the property of $\pi$ implies $\nabla_{G_U}(u)=\nabla_{G_W}(\pi(u))$. These are subgraphs of $n-2$ vertices and the effect of the two missing vertices is limited due to $K_{2,3}$-freeness. Since $G$ is $K_{3,2}$ free, $|N(u)\cap N(v)|\leq 2$ for every $v\in V\setminus U$. Hence, by removing $v$ from $G$ the possible changes in $\nabla(u)$ are: (i) Decreasing one or two elements of $\nabla(u)$ by $1$: Each vertex in $N(u)\cap N(v)$ (of which there are at most two) may lose one neighbor, (ii) Removal of a single element from $\nabla(u)$ (the element corresponding to $v$ itself, if $uv\in E$). 

To prove the Lemma, we first strengthen Lemma \ref{lem:CompatibleBound}, and replace the condition $\nabla(\phi(i))=\nabla(\psi(i))$ by $\nabla(\phi(i))\approx \nabla(\psi(i))$. The proof is essentially the same, with one change: Clearly the multiset $\{d(x,V\setminus \tilde N(T))\mid x\in U_i\}$ is uniquely determined by the condition $\nabla(\phi(i))=\nabla(\psi(i))$. Now we operate under the weaker condition $\nabla(\phi(i))\approx \nabla(\psi(i))$. Rather than the above multiset, we consider a multiset where at most two of the entries are $"\ast"$ which stand for the possibly deleted vertices. This multiset can take on only $\text{poly}(np)$ possible values. Lemma \ref{Lem:MultinomialWithBinomialProbsBound} and a union bound argument yield: 
$$\Pr(\nabla(\phi(i)) \approx\nabla(\psi(i)))\leq (np)^{-\Omega(\sqrt{np})}\cdot(np)^{O(1)} = (np)^{-\Omega(\sqrt{np})}.$$

By Lemma \ref{lem:ConfigurationBound} and the stronger version of Lemma \ref{lem:CompatibleBound} proved here, the expected number of acceptable type I or type II configurations in $G$ is at most
$$\sum_{k=3}{\log n} (np)^{-\omega_n(k)} \leq (np)^{-\omega_n(1)}.$$
\end{proof}

\begin{definition}
We say that a vertex pair $u,v\in R(G)$ is {\em interior} if $R(G\setminus\{u,v\}) = R(G)\setminus\{u,v\}$.
\end{definition}

\begin{lemma} \label{lem:NoPartialsIsomorphisms}
Whp, for every interior vertex pair $\{u,v\}$ it holds that (i) every automorphism of $G_{V\setminus\{x,y\}}$ fixes $R(G)\setminus\{u,v\}$ and (ii) For every interior vertex pair $\{x,y\}\neq \{u,v\}$, the graphs $G_{V\setminus\{u,v\}}$ and $G_{V\setminus\{x,y\}}$ are non-isomorphic.
\end{lemma}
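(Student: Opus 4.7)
The plan is to reduce both (i) and (ii) to Lemma \ref{lem:NoAcceptableConfigs}, reusing the endgame of the proof of Theorem \ref{thm:mainTheorem}: a ``bad'' (partial) automorphism is turned into a compatible type I or type II configuration, which then has to be acceptable. As in that proof we condition on $\text{diam}(G)<\frac{\log n}{2}$ from \cite{CL01}; combined with the minimum-degree-$2$ condition on $R(G)$, this yields a cycle of length $\le\log n$ through every vertex of $R(G)$, and since removing an interior pair $\{u,v\}$ leaves the $2$-core intact (that is the very definition of \emph{interior}), such a short cycle exists through every vertex of $R(G)\setminus\{u,v\}$ inside $R(G)\setminus\{u,v\}$ itself.

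For (i) (reading the apparent typo $G_{V\setminus\{x,y\}}$ as $G_{V\setminus\{u,v\}}$): suppose $\pi\in\text{aut}(G_{V\setminus\{u,v\}})$ moves a vertex $w\in R(G)\setminus\{u,v\}=R(G_{V\setminus\{u,v\}})$. Since $\pi$ preserves this $2$-core setwise, pick a cycle $C\subseteq R(G)\setminus\{u,v\}$ of length $k\le\log n$ through $w$ and apply exactly the same dichotomy as at the end of the proof of Theorem \ref{thm:mainTheorem}: if $\pi$ has no fixed point on $C$, parameterizing $C$ and $\pi(C)$ gives a type I configuration; otherwise restricting to the arc bounded by the two consecutive $\pi$-fixed vertices that straddle $w$ gives a type II one. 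Both have images inside $V\setminus\{u,v\}$, so taking $U=W=V\setminus\{u,v\}$ and the witness $\pi$ makes $(\phi,\psi)$ acceptable, contradicting Lemma \ref{lem:NoAcceptableConfigs}.

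For (ii): assume an isomorphism $\pi\colon G_{V\setminus\{u,v\}}\to G_{V\setminus\{x,y\}}$ with $\{u,v\}\neq\{x,y\}$. After relabeling we may take $u\notin\{x,y\}$; then $u\in R(G)\setminus\{x,y\}=R(G_{V\setminus\{x,y\}})$, so its $\pi$-preimage $w$ lies in $R(G)\setminus\{u,v\}$, and $w\neq u=\pi(w)$ because $u$ is not in the domain of $\pi$. Again take a short cycle $C\subseteq R(G)\setminus\{u,v\}$ through $w$; the type I/II dichotomy produces a configuration whose images lie in $V\setminus\{u,v\}$ and $V\setminus\{x,y\}$ respectively, so it is acceptable with $U=V\setminus\{u,v\}$, $W=V\setminus\{x,y\}$ and witness $\pi$, again contradicting Lemma \ref{lem:NoAcceptableConfigs}.

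The main technical obstacle, as in Theorem \ref{thm:mainTheorem}, is the uniform short-cycle claim inside $R(G)\setminus\{u,v\}$: because the ambient $2$-core is not obviously expanding and we have deleted two vertices from it, one must verify that the BFS argument producing a short cycle around any vertex of $R(G)$ still works after any such deletion, uniformly over interior pairs $\{u,v\}$. Once this is in hand, both claims follow by a direct reprise of the cycle-vs-arc dichotomy already used for Theorem \ref{thm:mainTheorem}.
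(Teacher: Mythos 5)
Your overall strategy is the paper's: reduce both parts to Lemma \ref{lem:NoAcceptableConfigs} by converting a bad isomorphism $\pi$ into an acceptable type I or II configuration via the cycle-versus-arc dichotomy from Theorem \ref{thm:mainTheorem}, treating $\{x,y\}=\{u,v\}$ and $\{x,y\}\neq\{u,v\}$ uniformly. However, there is a genuine gap in the step you rely on throughout: the claim that $\mathrm{diam}(G)<\frac{\log n}{2}$ together with the minimum-degree-$2$ property of $R(G)$ ``yields a cycle of length $\le\log n$ through \emph{every} vertex of $R(G)$.'' Minimum degree $2$ does not put every vertex on a cycle: the $2$-core may contain chains, i.e.\ paths whose internal vertices have degree $2$ and which form bridges between biconnected pieces (two triangles joined by a path is already a $2$-core in which the internal path vertices lie on no cycle), and the diameter bound does not rescue this, since the two neighbors of such a vertex $w$ may be connected \emph{only} through $w$. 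Consequently your case analysis is incomplete: an isomorphism $\pi$ that fixes every cycle vertex pointwise but moves a chain vertex is not covered by your argument. The paper handles exactly this by a second case --- if $\pi$ fixes all vertices lying on cycles, it must carry some path between two $\pi$-fixed cycle vertices to a \emph{different} path between the same endpoints, and these two short paths (length $<\frac{\log n}{2}$ by the diameter bound) form a type II configuration with confluences only at the endpoints. Your argument never produces a configuration in this situation.

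A second, related unresolved point is the one you yourself flag as the ``main technical obstacle'': the short cycle (or short path) must live in $G_{V\setminus\{u,v\}}$, not just in $G$, and deleting two vertices can increase distances. You leave this open; the paper closes it cheaply by starting from the stronger whp bound $\mathrm{diam}(G)<\frac{\log n}{8}$ and observing that removing a vertex at most doubles the diameter, so $\mathrm{diam}(G_{V\setminus\{u,v\}})<\frac{\log n}{2}$ and all cycles and connecting paths used in the dichotomy can be taken of length $<\log n$ inside the deleted graphs. With the missing second case added and the diameter of the deleted graphs controlled as above, your reduction to Lemma \ref{lem:NoAcceptableConfigs} goes through exactly as in the paper.
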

\begin{proof}
We may assume that $\text{diam}(G) < \frac {\log n}8$, as this holds whp~\cite{CL01}. Also,\\ \mbox{$\text{diam}(G_{V\setminus\{u,v\}})< \frac {\log n}2$} (likewise for $\{x,y\}$) since the removal of a vertex at most doubles the diameter. By Lemma \ref{lem:NoAcceptableConfigs}, we may also assume that $G$ has no acceptable type I or II configurations. 

We prove both parts of the Lemma together by considering as well the case $\{x,y\}=\{u,v\}$. Assume that there exists an isomorphism $\pi$ between $G_{V\setminus\{u,v\}}$ and $G_{V\setminus\{x,y\}}$ that does not pointwise fix the $2$-core. To prove the Lemma, it is enough to show that there exists an acceptable type I or II configuration in $G$. We consider two cases, first  where $\pi$ moves some vertex in $R(G_{V\setminus\{u,v\}})$ that resides in a cycle. Since $\text{diam}(G_{V\setminus\{u,v\}})< \frac {\log n}2$, we may assume that this cycle has length $<\log n$. The existence of an acceptable type I or II configuration follows from the same argument as that in Theorem \ref{thm:mainTheorem}. In the second case, $\pi$ fixes pointwise every cycle of $R(G_{V\setminus\{u,v\}})$. Thus, it must map some path between two vertices in cycles, fixed by $\pi$, to a different path between these two vertices. Due to the bound on the diameter of $G_{V\setminus\{u,v\}}$, the length of these paths must be $<\frac {\log n}2$, which yields a type II acceptable configuration.  
\end{proof}

\begin{proof}[Proof of Theorem \ref{thm:reconstruct}]

We may and will assume that $G$ satisfies the conclusion of Lemma \ref{lem:NoPartialsIsomorphisms}. For $u\in U$, we denote $\tilde G_u := G_{V\setminus\{u\}}$.

We claim that the cardinality $|R(G)|$ is reconstructible. Indeed, it is known~\cite{Bon91} that the degree sequence of $G$ is reconstructible, and thus, the property $R(G)=V$ is recognizable. Now, assume that $R(G)\neq V$. It is clearly possible to determine $d(u)$ from $\tilde G_u$. Also $R(G)=R(\tilde G_u)$ when $d(u) = 1$. Since $u\in R(G)$ iff $|R(\tilde G_u)|< |R(G)|$, we can determine whether $u\in R(G)$ by observing $\tilde G_u$. 

We also note that the degree sequence of $G$'s $2$-core is reconstructible. Indeed, if $V=R(G)$ this follows from the reconstructibility of $G$'s degree sequence. Otherwise, the $2$-core itself is reconstructible, as above. 

Let  $A = \{u\in R(G)\mid d(v,R(G))\geq 4 \text{~for all~} v\in N(u)\}$. Note that every vertex pair in $A$ is interior. It is not hard to determine whether $u\in A$ given $\tilde G_u$, based on the reconstructibility of the $2$-core's degree sequence. We claim that $A$ contains almost all vertices. By Lemma \ref{lem:Humongous2Core}, there holds whp $|R(G)|\geq n-o(n)$. For $v\in V$
$$\Pr(d(v,R(G))\leq 3) \leq O(np^3e^{-np})$$
and by the union bound
$$\Pr(\exists v~~v\in N(u)\wedge d(v,R(G))\leq 3) \leq O(np^4e^{-np}) \leq o(1).$$
So, let $v'\in V\setminus\{u\}$ and $u'\in V\setminus\{v\}$ be such that $\{u,v'\}$ and $\{v,u'\}$ are interior pairs, and there exists an isomorphism $\pi$ between $G_{V\setminus\{u,v'\}}$ and $G_{V\setminus\{u',v\}}$. By Lemma \ref{lem:NoPartialsIsomorphisms}, this holds only when $u=u'$, $v=v'$ and $\pi$ fixes the $2$-core pointwise. Using this property, we can identify the vertices $v$ and $u$ respectively in the graphs $\tilde G_u$ and $\tilde G_v$ and identify each vertex in the $2$-core of one graph with its counterpart in the other. This allows us to reconstruct $G$ up to the question of whether $uv\in E$. Since $|E|$ is reconstructible, this last question can be answered as well.
\end{proof}

\section{Connections with the Canonical Graph Labeling Problem} 
In this section we describe a polynomial time random graph canonical labeling algorithm for graphs in $G(n,p)$ where $\omega(\frac 1n)\leq p\leq n^{-(0.5+\epsilon)}$.

Let ${\cal C}$ be the collection of all {\em rooted, oriented} cycles of length $3\leq k\leq \log n$ in an $n$-vertex graph $G=(V,E)$. We use $A \leq B$ to denote the lexicographic ordering between multisets of integers, where the elements in $A$ and in $B$ appear in increasing order. We equip $\cal C$ with the semi-order $\prec$ where short cycles precede longer ones. For two cycles $X=(x_1,\ldots, x_k), Y=(y_1,\ldots, y_k)\in \cal C$ we say that $X \prec Y$ if for some $i$ there holds $\nabla(x_i)<\nabla(y_i)$ and $\nabla(x_j)=\nabla(y_j)$ for every $1\leq j<i$.

We claim that for the relevant range of $p$, a $G(n,p)$ graph satisfies the following conditions whp:

\begin{enumerate}
\item Each connected component of $G_{V\setminus R(G)}$ is a tree of size $\leq \log n$.
\item $\text{diam}(G)<\frac{\log n}{2}$.
\item ${\cal C}(G)$ is totally ordered by $\prec$.
\end{enumerate}
Property (1) is easy to derive by a first-moment argument. For property (2), see \cite{CL01}. A proof of Property (3) follows from property (2) by a simple variation of the proof of Theorem \ref{thm:mainTheorem}.

We now explain how to canonically label a graph $G=(V,E)$ with these three properties. To a vertex $v$ that is contained in a cycle we assign the $\prec$-smallest label $X=(x_1,\ldots, x_k)\in \cal C$ over all cycles for which $v=x_1$. This label can be found in polynomial time. If $k$ is the length of the shortest cycle through $v=x_1$, then it is easy to show that there are at most $n^3$ such cycles. We scan all of them and pick the $\prec$-smallest one.

Note next, that a vertex $v\in R(G)$ that is not contained in a cycle must reside on the unique path between two vertices $u,w\in V$, each contained in a cycle. Therefore $v$ is uniquely defined by its distances from $u$ and from $w$. This, and the labels of $u$ and $w$, give us a unique label for $v$.

Finally we find labels for vertices in $V\setminus(R(G))$. By property (1), such a vertex belongs either to (i) a tree of size $\leq \log n$ rooted at some vertex of $R(G)$ or (ii) an acyclic connected component of size $\leq \log n$. Let $v$ be a vertex of type (i), belonging to a tree $T$ rooted at $u\in R(G)$. There are only $\text{poly}(n)$ rooted trees of size $\leq \log n$ \cite{Ott48}, so we can list them and give a unique polynomial-length label to each vertex of each such class. We label $v$ by a pair $(x,y)$, where $x$ is the label of the vertex corresponding to $v$ in $T$'s isomorphism class in the list, and $y$ is $u$'s label. That is, $v$ is labeled as "The vertex of type $x$ in the tree rooted at $u$". Type (ii) vertices are likewise handled, using a list of all isomorphism classes of non-rooted trees. To deal with vertices on acyclic connected components, collect all connected components of the same isomorphism class and give each of them a unique number. The label of $v$ consists of the type of tree that contains it, that tree's ordinal number in its isomorphism class, and $v$'s location in that tree.

\section{Discussion and Open Problems}
For smaller values of $p$ the structure of $\text{aut}(G)$ may become somewhat more complicated. For $p=\Theta(\frac 1n)$, a $G(n,p)$ graph has, with probability bounded away from zero and one, some small symmetric components, e.g., an isolated triangle. Moreover, with probability $\in(0,1)$ even the $2$-core of the graph's giant component, has a nontrivial symmetry. This may result e.g., from a triangle that "hangs off" the $2$-core. However, as shown in \cite{pit90}, whp this $2$-core has a unique biconnected component of $\Omega(n)$ vertices. We suspect that this giant biconnected component is rigid whp.

For $\frac {(5/2+\epsilon) \log n}n \leq p\leq \frac 12$ it was shown by Bollob{\'a}s \cite{Bol90} that not only is $G$ reconstructible whp, such graphs have reconstruction number three. We do not know whether this holds as well for smaller and substantially smaller values of $p$.

\bibliography{}

\end{document}